\documentclass[11pt]{article}
\usepackage[a4paper]{anysize}\marginsize{3.5cm}{3.5cm}{1.3cm}{2cm}
\pdfpagewidth=\paperwidth \pdfpageheight=\paperheight
\usepackage{amsfonts,amssymb,amsthm,amsmath,eucal}
\usepackage{pgf}
\usepackage{bbm,array}
\usepackage{tikz} 
\usepackage{float}
\restylefloat{table}
\usepackage{subfigure}
\usepackage{caption}
\usetikzlibrary{arrows}
\pagestyle{myheadings}
\usepackage{multicol}
\usepackage{multirow}


\theoremstyle{plain}
\newtheorem{thm}{Theorem}[section]
\newtheorem{theorem}[thm]{Theorem}

\newtheorem{lemma}[thm]{Lemma}

\newtheorem{corollary}[thm]{Corollary}

\theoremstyle{definition}
\newtheorem{definition}[thm]{Definition}
\newtheorem{remark}[thm]{Remark}

\newtheorem{fact}[thm]{Fact}

\newtheorem{thevarthm}[thm]{\varthmname}

\newenvironment{varthm*}[1]{\trivlist\item[]{\bf #1.}\it}{\endtrivlist}


\newcommand\be{\begin{eqnarray*}}
\newcommand\ee{\end{eqnarray*}}

\newcommand\newop[2]{\def#1{\mathop{\rm #2}\nolimits}}
\newop\mod{mod}
\newop\log{log}
\newop\ord{ord}
\newop\Gal{Gal}
\newop\SL{SL}
\newop\GL{GL}
\newop\Bl{Bl}
\newop\mult{mult}
\newop\mass{mass}
\newop\div{div}
\newop\codim{codim}
\newop\sing{sing}
\newop\vdim{vdim}
\newop\edim{edim}
\newop\Ass{Ass}
\newop\size{size}
\newop\reg{reg}
\newop\areg{areg}
\newop\asreg{asreg}
\newop\satdeg{satdeg}
\newop\supp{supp}
\newop\gin{gin}
\newop\ini{in}
\newop\vol{vol}
\newop\sat{sat}
\newop\length{length}
\newop\depth{depth}
\newop\characteristic{char}

\def\keywordname{{\bfseries Keywords}}%
\def\keywords#1{\par\addvspace\medskipamount{\rightskip=0pt plus1cm
\def\and{\ifhmode\unskip\nobreak\fi\ $\cdot$
}\noindent\keywordname\enspace\ignorespaces#1\par}}
\def\subclassname{{\bfseries Mathematics Subject Classification
(2000)}\enspace}
\def\subclass#1{\par\addvspace\medskipamount{\rightskip=0pt plus1cm
\def\and{\ifhmode\unskip\nobreak\fi\ $\cdot$
}\noindent\subclassname\ignorespaces#1\par}}

\definecolor{ttqqqq}{rgb}{0.,0.,0.}
\definecolor{zzttqq}{gray}{0.4}

\captionsetup[table]{labelformat=simple, labelsep=none}
\captionsetup[figure]{labelformat=simple, labelsep=none}

\begin{document}

\author{ Magdalena ~Lampa-Baczy\'nska}
\title{The effect of points fattening on del Pezzo surfaces}
\date{\today}
\maketitle
\thispagestyle{empty}

\begin{abstract}

	In this paper, we study the fattening effect of points over the complex numbers for del Pezzo surfaces  $\mathbb{S}_r$ arising by
blowing-up of $\mathbb{P}^2$ at $r$ general points, with $ r \in \{1, \dots,  8 \}$.  Basic questions
when studying the problem of points fattening on an arbitrary variety are what is the minimal growth
of the initial sequence and how are the sets on which this minimal growth happens
 characterized geometrically. We provide complete answer for del Pezzo surfaces.

\keywords{initial degree, initial sequence, blow-up,
	alpha problem, Chudnovsky-type results} \subclass{52C30,  14N20, 05B30}
\end{abstract}


\section{Introduction}\label{intro}

  In this paper, we follow the approach to fat point schemes 
initiated by Bocci and Chiantini in \cite{BocCha11}.
The  initial degree $\alpha(I)$ of a homogeneous
ideal $I \subset \mathbb{C}[\mathbb{P}^n]$ is the least degree $t$ such that the homogeneous component $I_t$
in degree $t$ is non-zero. Although this notion was known since $1981$ (see \cite{Chu81}),
Bocci and Chiantini  used this invariant for the first time in order to study fat points subschemes in the projective plane.

This definition  can be extended  to symbolic powers $I ^{(m)}$  of $I$, namely $\alpha(I^{(m)})$ is the least degree $t$ such that the homogeneous component $(I ^{(m)})_t$
in degree $t$ is non-zero. The sequence
$$\alpha(I), \alpha(I^{(2)}), \alpha(I^{(3)}), \dots$$
is called the \emph{initial sequence} of $I$.

Let $Z\subset \mathbb{P}^2 (\mathbb{C})$ be the set of points and $I$ be its radical ideal. By Nagata-Zariski theorem (\cite{EIS}, Theorem $3.14$) the ideal of scheme $mZ$ is the $m-$th symbolic power of $I$. Bocci and Chiantini proved, among others,
that sets of points $Z$ in $\mathbb{P}^2 (\mathbb{C})$ such that
$$\alpha(I^{(2)}) -\alpha(I)=1,$$
 are  either contained in a single line or form the so-called star-configuration.

Results of Bocci and Chiantini have been generalized in  \cite{DST13}
by Dumnicki, Szemberg and Tutaj-Gasi\'nska. They were studying
configurations of points  in $\mathbb{P}^2 (\mathbb{C})$ with
$$\alpha(I^{(m+1)})-\alpha(I^{(m)})=1$$
for some $m\geq 2$ and obtained their full characterization (see \cite{DST13}, Theorem $3.4$).

These considerations were extended for another types of spaces. Except for spaces $\mathbb{P}^{n}$  the problem of points fattening was considered among others by me in \cite{Lanckorona1} for the space $\mathbb{P}^1 \times \mathbb{P}^1$  and by Di Rocco, Lundman and Szemberg in \cite{DLS13} for Hirzebruch surfaces (with appropriately modified definition of the initial degree).

The aim of this paper is to make similar classification with respect to points fattening on del Pezzo surfaces. In this papper a del Pezzo surface (over an arbitrary field) is a smooth surface $X$ with the ample anticanonical bundle $-K_{X}$.

 In fact considerations on points fattening effect was initiated on del Pezzo surface  $\mathbb{P}^2(\mathbb{C})$ and this path of research was continued to another one, namely $\mathbb{P}^1 \times  \mathbb{P}^1$. Over $\mathbb{C}$ there are exactly 10 del Pezzo surfaces: $\mathbb{P}^2$, $\mathbb{P}^1 \times  \mathbb{P}^1$ and $8$ surfaces $\mathbb{S}_r$ arising by blowing-up of $\mathbb{P}^2$ in $r$ general points, where $1 \leq r \leq 8$. In this paper, we complete a picture for the last $8$ del Pezzo surfaces. More precisely, for each of the surfaces  $\mathbb{S}_r$ we establish maximal integer $m$, such that 
 $$\alpha(I)= \alpha(I^{(2)})= \cdots = \alpha(I^{(m)}) =1$$ holds and we describe subschemes realizing this sequence of equalities. We focus mainly on the smallest possible value of $\alpha(I)$, namely $1$. In the case of the surfaces $\mathbb{S}_1$ and $\mathbb{S}_2$ we additionally give characterization of subschemes satisfying a more general condition, namely $$\alpha(I^{(m)})= \alpha(I^{(m+1)}) \cdots = \alpha(I^{(m+a)})$$ for some integers $m$ and $a$. We conclude our paper presenting a Chudnovsky-type inequlity.
 
 \section{Basic notions and auxiliary facts}

 The original definition of the initial degree given in \cite{BocCha11} was extended in \cite{DLS13} for arbitrary smooth projective variety with an ample class.

\begin{definition}\textbf{(Initial degree)}
	\label{alfa}
	Let $X$  be a smooth projective variety with an ample line bundle $L$ on $X$ and let $Z$ be a reduced subscheme of $X$
	defined by the ideal sheaf ${\mathcal I}_Z \subset {\mathcal O}_{Z}$. For a positive integer $m$ the initial degree
	(with respect to $L$) of the subscheme $mZ$ is the integer
	$$\alpha(mZ)= \alpha({\mathcal I}^{(m)}_{Z}):=\min\left\{d:\; H^0(X,dL\otimes{\mathcal I}^{(m)}_{Z})\neq 0\;\right\}.$$
\end{definition} 
\noindent  Analogously, the initial sequence  (with respect to $L$)  of a subscheme $Z$ is the sequence
$$\alpha(Z), \alpha(2Z), \alpha(3Z), \dots$$ 

The initial sequence is sequence of positive integers with the following properties:

\begin{fact}
\label{basic}

\hspace{0,1cm}
\begin{itemize}
  \item[1)] The initial sequence is weakly growing, i.e., $\alpha(mZ)\leq \alpha(nZ)$ for $ n\geq m$.
  \item[2)] The initial sequence is subadditive, i.e., $\alpha((m+n)Z) \leq \alpha(mZ)+ \alpha(nZ)$.
  \item[3)] The initial sequence is monotonic with respect to the subcheme, i.e. if $Z \subset W$
  then $\alpha(mZ) \leq \alpha(mW)$.
\end{itemize}
\end{fact}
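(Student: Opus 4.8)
The plan is to prove each of the three properties of the initial sequence directly from the definition of $\alpha(mZ)$ as the minimal degree $d$ with $H^0(X, dL\otimes\cali_Z^{(m)})\neq 0$. The key structural input is that each symbolic power $\cali_Z^{(m)}$ is the ideal sheaf of the fat point scheme $mZ$, and that these ideals satisfy obvious containments and a product/multiplication structure. Throughout I will pass freely between the statement ``$H^0(X, dL\otimes\cali_Z^{(m)})\neq 0$'' and ``there exists a nonzero global section $s$, i.e. a divisor $D\in |dL|$ vanishing to order at least $m$ along $Z$.''

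\medskip
\noindent\textbf{Monotonicity (property 1).} For fixed $m$, I would observe that $L$ is ample, so for $n\geq m$ there is a natural inclusion of sheaves obtained by tensoring a nonzero section of $(n-m)L$ (using ampleness to guarantee such a section exists for large enough twists, or more cleanly using $dL\hookrightarrow (d+k)L$ for $k\geq 0$). Concretely, if $s\in H^0(X, \alpha(mZ)L\otimes\cali_Z^{(m)})$ is a nonzero section and $t\in H^0(X,(\alpha(nZ)-\alpha(mZ))L)$ is nonzero, then... the cleaner route is: any section of $dL\otimes\cali_Z^{(m)}$ multiplied by a nonzero section of $eL$ gives a section of $(d+e)L\otimes\cali_Z^{(m)}$, so $\alpha$ against a fixed symbolic power is non-decreasing in the twist. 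Combined with the containment $\cali_Z^{(n)}\subseteq\cali_Z^{(m)}$ for $n\geq m$ (higher symbolic powers impose stronger vanishing), a section witnessing $\alpha(nZ)$ also witnesses nonvanishing for $mZ$ in the same degree, giving $\alpha(mZ)\leq\alpha(nZ)$. The main subtlety here is making sure a nonzero section of the appropriate multiple of $L$ exists; ampleness of $L$ on the projective variety $X$ supplies this.

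\medskip
\noindent\textbf{Subadditivity (property 2).} This follows from the multiplicativity of symbolic powers: $\cali_Z^{(m)}\cdot\cali_Z^{(n)}\subseteq\cali_Z^{(m+n)}$. Taking nonzero sections $s\in H^0(X, \alpha(mZ)L\otimes\cali_Z^{(m)})$ and $t\in H^0(X, \alpha(nZ)L\otimes\cali_Z^{(n)})$, their product $s\otimes t$ is a nonzero section of $(\alpha(mZ)+\alpha(nZ))L\otimes\cali_Z^{(m+n)}$. Hence $(m+n)Z$ is cut out in degree $\alpha(mZ)+\alpha(nZ)$, forcing $\alpha((m+n)Z)\leq\alpha(mZ)+\alpha(nZ)$. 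Nonvanishing of the product uses that $X$ is a variety (integral), so the sheaf of sections has no zero divisors.

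\medskip
\noindent\textbf{Monotonicity in the subscheme (property 3).} If $Z\subset W$ then $\cali_W\subseteq\cali_Z$, and this containment is preserved under taking symbolic powers, so $\cali_W^{(m)}\subseteq\cali_Z^{(m)}$. Any nonzero section in $H^0(X, dL\otimes\cali_W^{(m)})$ is therefore also a nonzero section in $H^0(X, dL\otimes\cali_Z^{(m)})$, so the minimal degree for $Z$ is at most that for $W$, i.e. $\alpha(mZ)\leq\alpha(mW)$. \textbf{The part requiring the most care} is justifying $\cali_W^{(m)}\subseteq\cali_Z^{(m)}$ from $\cali_W\subseteq\cali_Z$; this is where one invokes the Nagata–Zariski description of symbolic powers as ideals of fat point schemes (cited in the introduction), so that the containment is geometric: vanishing to order $m$ along the larger scheme $W$ entails vanishing to order $m$ along $Z$.
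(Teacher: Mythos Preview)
Your proof is correct. The paper does not actually prove Fact~\ref{basic}: immediately after stating it, the author writes that these ``are generally known facts, thus we take them for granted.'' Your argument supplies what the paper omits, and the core ingredients---the containment $\cali_Z^{(n)}\subseteq\cali_Z^{(m)}$ for $n\geq m$, the product inclusion $\cali_Z^{(m)}\cdot\cali_Z^{(n)}\subseteq\cali_Z^{(m+n)}$, and $\cali_W^{(m)}\subseteq\cali_Z^{(m)}$ for $Z\subset W$ via the Nagata--Zariski description---are exactly the right ones.

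One minor remark on your treatment of property~1): the opening detour through multiplying by sections of $eL$ is unnecessary and slightly off (ampleness alone does not guarantee $H^0(X,eL)\neq 0$ for \emph{every} $e\geq 1$). The clean argument is just the one you give afterwards: a nonzero section in $H^0(X,\alpha(nZ)L\otimes\cali_Z^{(n)})$ lies in $H^0(X,\alpha(nZ)L\otimes\cali_Z^{(m)})$ by the containment of symbolic powers, so $\alpha(mZ)\leq\alpha(nZ)$ directly. You can simply drop the first half of that paragraph.
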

Properties in Fact \ref{basic} are generally known facts, thus we take them for granted.

The choice of a line bundle $L$ strictly depends on the variety $X$. 
In the projective plane the $\alpha$--invariant was taken with respect to the line bundle ${\mathcal O}_{\mathbb{P}^2}(1)$. This line bundle is  $\frac{1}{3}$ of
the anticanonical bundle $-K_{\mathbb{P}^2}={\mathcal O}_{\mathbb{P}^2}(3)$.
Similarly  on $\mathbb{P}^1\times \mathbb{P}^1$ it is natural to work with the $\alpha$--invariant taken with respect
to the line bundle ${\mathcal O}_{\mathbb{P}^1\times \mathbb{P}^1}(1,1)$. In this case the line bundle
is  half of the anti-canonical divisor as on $\mathbb{P}^1\times \mathbb{P}^1$ we have
$-{ K}_{\mathbb{P}^1\times \mathbb{P}^1}= {\mathcal O}_{\mathbb{P}^1\times \mathbb{P}^1}(2,2)$.

The most natural choice of the line bundle on del Pezzo surfaces $\mathbb{S}_{r}$ seems to be the ancticanonical bundle $$\mathbb{L}_{r}= -K_{\mathbb{S}_{r}}= 3H- E_1- \dots - E_{r},$$
which is not divisible in the Picard group $Pic(\mathbb{S}_{r})$.

The fattening effect can be also considered more generally for graded linear series.

Let $V_{\bullet} = {\bigoplus}_{d\geq 0} V_{d}  \subseteq {\bigoplus}_{d\geq 0} H^{0}(X, dL)$ be a graded linear system. We define
$$\alpha_{V_{\bullet}}(mZ)= \min \{ d: \exists s \in V_{d} : \mult_{Z}(s) \geq m \}.$$
Then we have the following property.

\begin{lemma}
\label{systems}
Let $V_{\bullet} \subseteq W_{\bullet}$ be graded linear systems. Then
$$\alpha_{V_{\bullet}}(mZ) \geq \alpha_{W_{\bullet}}(mZ).$$
\end{lemma}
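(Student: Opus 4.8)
The plan is to reduce the stated inequality to the elementary fact that the minimum of a set of integers can only grow when the set shrinks. First I would unwind the two definitions. By Definition as given,
$$\alpha_{V_\bullet}(mZ) = \min A_V, \qquad \alpha_{W_\bullet}(mZ) = \min A_W,$$
where
$$A_V = \{\, d : \exists\, s \in V_d \text{ with } \mult_Z(s) \geq m \,\}, \qquad A_W = \{\, d : \exists\, s \in W_d \text{ with } \mult_Z(s) \geq m \,\}.$$

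Next I would invoke the hypothesis $V_\bullet \subseteq W_\bullet$, which by definition of inclusion of graded linear systems means $V_d \subseteq W_d$ for every degree $d$. The only substantive step is then the observation that any section witnessing membership of a degree $d$ in $A_V$ automatically witnesses membership of $d$ in $A_W$: if $s \in V_d$ satisfies $\mult_Z(s) \geq m$, then $s \in W_d$ and satisfies the same multiplicity bound, so $d \in A_W$. This yields the set inclusion $A_V \subseteq A_W$.

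Finally I would conclude by monotonicity of the minimum with respect to inclusion: since $A_V \subseteq A_W$ are sets of positive integers, we have $\min A_V \geq \min A_W$, which is precisely $\alpha_{V_\bullet}(mZ) \geq \alpha_{W_\bullet}(mZ)$.

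There is genuinely no hard part here; the entire content of the lemma is the inclusion $A_V \subseteq A_W$, which is immediate from the definitions. The only point deserving a word of care is that the two minima are well-defined, i.e.\ that the relevant sets are nonempty, so that the inequality is not vacuous. This is guaranteed as soon as $W_\bullet$ contains a section of some degree vanishing to order at least $m$ along $Z$, which is the standing assumption on the graded linear systems under consideration; indeed, whenever $\alpha_{W_\bullet}(mZ)$ is defined, so too is $\alpha_{V_\bullet}(mZ)$ forced to lie in a (possibly larger, possibly empty) comparison, and in the cases relevant to this paper both are finite.
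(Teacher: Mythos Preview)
Your proof is correct and follows the same idea as the paper's proof, which consists of the single sentence ``It follows immediately from the fact that $W_\bullet$ has more sections than $V_\bullet$.'' You have simply unpacked this into the explicit inclusion $A_V \subseteq A_W$ and the monotonicity of $\min$, which is exactly the content of that sentence.
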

\begin{proof}
It follows immediately from the fact, that $W_{\bullet}$ has more sections than $V_{\bullet}$.
\end{proof}

\begin{corollary}
Let $t \geq s$  and let $V_{\bullet} = \bigoplus V_{d}$ and $W_{\bullet} = \bigoplus W_{d}$, where
$$V_{d}=H^{0}(\mathbb{S}_{t}, 3dH- d \cdot \sum_{i=1}^{s} E_{i})\subseteq H^{0}(\mathbb{S}_{t},3dH),$$
$$W_{d}=H^{0}(\mathbb{S}_{t},3dH- d \cdot \sum_{i=1}^{t} E_{i})\subseteq H^{0}(\mathbb{S}_{t},3dH).$$
Then $W_{d} \subseteq V_{d}$ for all $d \geq 0$. Lemma \ref{systems} implies that
 $$\alpha_{W_{\bullet}}(mZ) \geq \alpha_{V_{\bullet}}(mZ).$$
 \end{corollary}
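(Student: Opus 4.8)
The plan is to establish the containment $W_d\subseteq V_d$ inside $H^0(\mathbb{S}_t,3dH)$ by comparing the vanishing conditions that the two subspaces impose, after which the asserted inequality on initial degrees is an immediate instance of Lemma~\ref{systems}. Since both $V_d$ and $W_d$ are already presented as subspaces of $H^0(\mathbb{S}_t,3dH)$, essentially all the work is to read off which sections each one contains.

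First I would make the two inclusions into $H^0(\mathbb{S}_t,3dH)$ explicit. For an effective divisor $dF$ on $\mathbb{S}_t$, tensoring with its canonical section embeds $H^0(\mathbb{S}_t,3dH-dF)$ into $H^0(\mathbb{S}_t,3dH)$, with image exactly those sections vanishing to order at least $d$ along $F$. Taking $F=\sum_{i=1}^{s}E_i$ identifies $V_d$ with the sections of $3dH$ vanishing to order $\geq d$ along each of $E_1,\dots,E_s$, and taking $F=\sum_{i=1}^{t}E_i$ identifies $W_d$ with those vanishing to order $\geq d$ along each of $E_1,\dots,E_t$.

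Next I would observe that, because $t\geq s$ and $d\geq 0$, the divisor $d\sum_{i=s+1}^{t}E_i$ is effective; hence any section vanishing to order $d$ along all of $E_1,\dots,E_t$ in particular vanishes to order $d$ along $E_1,\dots,E_s$. This yields $W_d\subseteq V_d$ for every $d\geq 0$, so $W_{\bullet}\subseteq V_{\bullet}$ as graded linear systems. Applying Lemma~\ref{systems} with $W_{\bullet}$ as the smaller system and $V_{\bullet}$ as the larger one then gives at once $\alpha_{W_{\bullet}}(mZ)\geq\alpha_{V_{\bullet}}(mZ)$.

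I do not anticipate a genuine obstacle here: the argument is pure bookkeeping with divisors on the blow-up. The only points meriting care are checking that the relevant difference of divisors is genuinely effective — which is exactly where both hypotheses $t\geq s$ and $d\geq 0$ enter — and keeping the direction of Lemma~\ref{systems} straight, so that the inclusion $W_{\bullet}\subseteq V_{\bullet}$ correctly places $\alpha_{W_{\bullet}}$ on the larger side of the inequality.
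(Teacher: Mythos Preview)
Your proposal is correct and follows exactly the approach the paper intends: the corollary carries no separate proof in the paper, as the containment $W_d\subseteq V_d$ is taken as evident (more vanishing conditions give a smaller space) and the inequality is then a direct application of Lemma~\ref{systems}. You have simply spelled out the bookkeeping behind the inclusion more carefully than the paper does.
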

 
 \begin{remark}
 Note that if $Z \subseteq \mathbb{S}_{t}  \setminus \{E_{s +1}, \dots, E_{t} \}$, then $\alpha_{W_{\bullet}}(mZ) $
 is $\alpha(mZ)$ counted on the surface $\mathbb{S}_{t}$, whereas $\alpha_{V_{\bullet}}(mZ)$ can be regarded either as
 $\alpha(mZ)$ on $\mathbb{S}_{t}$ or $\alpha(mZ)$ on $\mathbb{S}_{s}$. This allows us to compare $\alpha$'s computed on
 various del Pezzo surfaces, provided that it makes sense to consider the underlying set $Z$ on both surfaces.
\end{remark}

The study of sequences of minimal growth can be paralleled by the investigation of
ratios of the type
$$\frac{\alpha(mZ)}{m}.$$
There are some estimates for such quotients. The  first result of this kind, 
$$\frac{\alpha(mZ)}{m} \geq \frac{\alpha(Z)+1}{2}$$ appears in a work of Chudnovsky and it concerns
finite sets of points $Z$ in the projective plane (see \cite{HarHun13}, Proposition $3.1$ and \cite{EV83}). It was generalized to other spaces, i.e. $\mathbb{P}^3$, $\mathbb{P}^1 \times \mathbb{P}^1$ and Hirzebruch surfaces (see \cite{DLS13}, \cite{DST13}, \cite{Jan13} and \cite{Lanckorona1}). All these inequalities are called by the common name \textit{Chudnowsky-type results}. In Section $4$ we present our result of this kind for del Pezzo surfaces $\mathbb{S}_{r}$ with $r \leq 6$. 

Our paper is concluded by a comparison of points fattening effect for surface $\mathbb{S}_{1}$ considered as a del Pezzo surface and on the other hand as a Hirzebruch surface.

\section{The points fattening effect on $\mathbb{S}_{r}$}

In this section we present some results concerning the  fattening effect on $\mathbb{S}_r$.  Let us recall that $\mathbb{S}_r$ arises as the blowing-up of the complex
	projective plane in fixed $r$ general points  $P_1 ,..., P_r$. We denote by $f_{r} : \mathbb{S}_r \rightarrow \mathbb{P}^2$ the blow up, where $E_1, \dots, E_r$ are the exceptional divisors. If $r$ is fixed,  then we write simply $f$ instead of $f_r$.
	
In further considerations we will use the following observations about blow-ups.

\begin{remark} \label{adapted}
	\label{Ecomponent}
	If $F$ is a plane curve of degree $3k$  in $\mathbb{P}^2$ passing through the points $P_1, \dots, P_{r}$, so that $\mult_{P_{i}}(F)=m_{i} \geq k$ for $i \in \{1, \dots, r \}$,
	then $E_{i}$ is a $(m_{i}-k)-$tuple component of the divisor $f^{*}_{r}(F)- kE_1 - \dots - k E_{r}$ in the system $$|3kH- kE_1 - \dots - kE_{r}|= |-3k K_{\mathbb{S}_r}|.$$
\end{remark}

\begin{definition}\textbf{(Adapted transform)}
	We keep the notation as in Remark \ref{adapted}. The adapted transform of $F$ is the divisor
	$$A(F):=f^{*}_{r}(F)- kE_1 - \dots - k E_{r}=\widetilde{F} + {\sum}_{i=1}^{r} (m_{i} - k)E_{i},$$
	where $\widetilde{F}$ denotes the proper transform of $F$.
\end{definition}

\begin{lemma}
	\label{estimation}
	Let $D \in |k (3H - E_1 - \cdots - E_r)|$ for fixed $1 \leq r \leq 8$ and let $Q \in \mathbb{S}_{r}$. Then
	\begin{equation}
	\label{onE}
	{\mult}_{Q}(D) \leq 2 \cdot {\mult}_{f_{r}(Q)}(f_{r}(D))-k ,
	\end{equation}
	if  $Q \in E_1  \cup \ldots \cup  E_{r}$ and
	\begin{equation}
	\label{pozaE}
	{\mult}_{Q}(D) ={\mult}_{f_{r}(Q)}(f_{r}(D)) \leq 3k ,
	\end{equation}
if $Q \notin E_1  \cup \ldots \cup  E_{r}$. Furthermore, if equality holds in (\ref{pozaE}), then $f_{r}(D)$ is a union of lines through $f_{r}(Q)$.
\end{lemma}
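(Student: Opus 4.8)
The plan is to push $D$ down to $\mathbb{P}^2$ and to read off every multiplicity from the adapted transform of the image curve $C:=f_r(D)$. Since $D$ is an effective divisor in $|3kH-kE_1-\cdots-kE_r|=|-kK_{\mathbb{S}_r}|$, its image $C=f_r(D)$ is a plane curve of degree $3k$; writing $m_i:=\mult_{P_i}(C)$, I compare $E_i$-coefficients to see that $D$ coincides with the adapted transform $A(C)=\widetilde{C}+\sum_{i=1}^{r}(m_i-k)E_i$ of Remark \ref{adapted}, where $\widetilde{C}$ is the proper transform, and in particular that effectiveness forces $m_i\geq k$ for every $i$. I also use that, the $r$ points being general, the curves $E_1,\dots,E_r$ are pairwise disjoint smooth rational curves, so each $Q$ lies on at most one of them.

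Suppose first $Q\notin E_1\cup\cdots\cup E_r$. Then $f_r$ restricts to an isomorphism from a neighbourhood of $Q$ onto a neighbourhood of $f_r(Q)$ in $\mathbb{P}^2\setminus\{P_1,\dots,P_r\}$, and no $E_i$ meets $Q$, so near $Q$ the divisor $D$ agrees with $\widetilde{C}$, which maps isomorphically onto $C$. This gives $\mult_Q(D)=\mult_{f_r(Q)}(C)=\mult_{f_r(Q)}(f_r(D))$, the equality in \eqref{pozaE}, while $\mult_{f_r(Q)}(C)\leq 3k$ is the elementary bound of multiplicity by degree. For the final clause, if $\mult_{f_r(Q)}(C)=3k=\deg C$, then the dehomogenized equation of $C$ at $f_r(Q)$ has vanishing order $3k$ yet degree at most $3k$, hence is a homogeneous form of degree $3k$ in the two local coordinates; over $\mathbb{C}$ it factors into $3k$ linear forms, each defining a line through $f_r(Q)$, so $f_r(D)$ is a union of lines through $f_r(Q)$.

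Now suppose $Q\in E_j$ for the unique index $j$, so that $f_r(Q)=P_j$ and $\mult_{f_r(Q)}(f_r(D))=m_j$. Near $Q$ only the component $E_j$ of $D$ is present, and as $E_j$ is smooth one gets $\mult_Q(D)=\mult_Q(\widetilde{C})+(m_j-k)$. It therefore suffices to prove $\mult_Q(\widetilde{C})\leq m_j$, for then $\mult_Q(D)\leq m_j+(m_j-k)=2\mult_{f_r(Q)}(f_r(D))-k$, which is \eqref{onE}. I would establish this key inequality by intersection theory on $\mathbb{S}_r$: from $\widetilde{C}=f_r^{*}C-m_jE_j$, with $E_j^2=-1$ and $f_r^{*}C\cdot E_j=0$, one computes $\widetilde{C}\cdot E_j=m_j$; since $E_j$ is not a component of the proper transform $\widetilde{C}$, the local intersection index at $Q$ satisfies $(\widetilde{C}\cdot E_j)_Q\geq \mult_Q(\widetilde{C})\cdot\mult_Q(E_j)=\mult_Q(\widetilde{C})$, and summing local indices over $E_j\cap\widetilde{C}$ yields $\mult_Q(\widetilde{C})\leq(\widetilde{C}\cdot E_j)_Q\leq \widetilde{C}\cdot E_j=m_j$.

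The only delicate point is precisely the bound $\mult_Q(\widetilde{C})\leq m_j$ at a point on the exceptional divisor; everything else is bookkeeping with the adapted transform and the disjointness of the $E_i$. The intersection-theoretic computation settles it cleanly, and I expect one could alternatively verify it by a direct calculation in an affine chart of the blow-up, where the leading form of $C$ at $P_j$ contributes a factor whose order of vanishing along $E_j$ is exactly $m_j$.
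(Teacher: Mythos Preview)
Your proof is correct and follows essentially the same route as the paper: write $D$ as the adapted transform $\widetilde{C}+\sum_i(m_i-k)E_i$, use that $f_r$ is a local isomorphism off the exceptional locus for \eqref{pozaE}, and for \eqref{onE} reduce to the inequality $\mult_Q(\widetilde{C})\leq m_j$. The paper simply asserts this last inequality without justification and omits the equality clause entirely; your intersection-theoretic argument via $\widetilde{C}\cdot E_j=m_j$ and your factorisation of the leading form for the union-of-lines statement fill in exactly these gaps.
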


\begin{proof}
	Let $D \in |k (3H - E_1 - \cdots - E_r)|$  and $Q \in \mathbb{S}_{r}$. Then $\deg(f_{r}(D))=3k$. Let us denote by $m= {\mult}_{Q}(D)$.
	
	First we consider the situation, when $Q \notin E_1  \cup \ldots \cup  E_{r}$. Since $f_{r}$ is an isomorphism except the points $\{P_1, \dots, P_r \}$,
	then ${\mult}_{f_{r}(Q)}(f_{r}(D))=m$. The multiplicity of the singular
	point of the plane curve can be at most the degree of this curve, thus  $f_{r}(D)$ may have at most $3k-$tuple points, what finishes
	the proof of statement (\ref{pozaE}).
	
	We assume now, that $Q \in E_i$ for some $i \in \{1, \dots, r \}.$ Let us denote by $F= f_{r}(D)$. Then
	$${\mult}_{Q}(D) = {\mult}_{P_{i}}(F)-k + {\mult}_{Q}(\widetilde{F}).$$
	But ${\mult}_{Q}(\widetilde{F}) \leq {\mult}_{P_{i}}(F)$, thus we finally obtain the statement (\ref{onE}).
\end{proof}

A natural consequence of Lemma \ref{estimation} is the following property for surfaces $\mathbb{S}_r$.
\begin{corollary} \label{onexceptional}
	If $Z \subset \mathbb{S}_r$  for $1 \leq r \leq 8$ satisfies the condition
	$$\alpha(mZ)= \dots =\alpha((m+t)Z)$$
	for some positive integers $m$ and $t \geq 3$, then $Z \subset E_1 \cup \dots 
	\cup  E_r$.
\end{corollary}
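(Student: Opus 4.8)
The plan is to exploit the key inequality (\ref{onE}) from Lemma \ref{estimation}, which controls how fast multiplicity along an exceptional divisor can grow relative to the multiplicity of the image curve downstairs in $\mathbb{P}^2$. The strategy is to assume, for contradiction, that $Z$ has at least one point $Q$ lying \emph{off} the exceptional locus $E_1 \cup \dots \cup E_r$, and to show that such a point forces the initial sequence to strictly increase somewhere in the block $\alpha(mZ), \dots, \alpha((m+t)Z)$, contradicting the hypothesis that all these values coincide.

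First I would set $\alpha := \alpha(mZ) = \dots = \alpha((m+t)Z)$ and observe that the common value means there exists, for each $j$ with $0 \le j \le t$, a section realizing multiplicity at least $m+j$ at every point of $Z$ while staying in the fixed degree $\alpha \mathbb{L}_r = \alpha(3H - E_1 - \dots - E_r)$. Pushing such a divisor $D$ down via $f_r$, we obtain a plane curve $F = f_r(D)$ of degree $3\alpha$. The crucial point is statement (\ref{pozaE}): at a point $Q \notin E_1 \cup \dots \cup E_r$ the multiplicity is preserved under $f_r$ and is bounded by $3\alpha$ (the degree over $3$ times $3$), with equality only when $F$ is a union of lines through $f_r(Q)$. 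So I would compare the growth required by $\alpha((m+t)Z) = \alpha$ with $t \ge 3$ against what a plane curve of bounded degree can sustain at an honest plane point.

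The technical heart of the argument is a counting/degree estimate. If $Z$ contains a point off the exceptional locus, then realizing multiplicities $m, m+1, m+2, m+3$ there (using $t \ge 3$) within curves of the \emph{same} degree $3\alpha$ is impossible unless the curves degenerate, because demanding higher multiplicity at a fixed plane point without raising the degree eventually collapses the curve to a configuration of lines through that point (the equality case of (\ref{pozaE})), and a union of $3\alpha$ lines cannot simultaneously carry the prescribed multiplicities at all the points of $Z$ once the multiplicity exceeds the line count. The role of the threshold $t \ge 3$ is precisely to push the required multiplicity past this rigidity barrier; with $t \ge 3$ the forced degeneracy becomes contradictory. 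I would make this explicit by tracking how the multiplicity at $Q$ and the degree $3\alpha$ interact across the four consecutive values and deriving that $\alpha$ must strictly jump.

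The main obstacle I anticipate is making the degeneration argument fully rigorous rather than heuristic: one must rule out \emph{all} plane curves of degree $3\alpha$ achieving the four consecutive multiplicities at an off-exceptional point, not merely generic ones, and carefully handle the equality case in (\ref{pozaE}) where $f_r(D)$ becomes a union of lines through $f_r(Q)$. The cleanest route is likely to argue that a point $Q$ off the exceptional locus behaves exactly like a point of $\mathbb{P}^2$ (since $f_r$ is an isomorphism there), reducing the claim to the known $\mathbb{P}^2$ result that strict growth is forced under these conditions, and then to quarantine the line-configuration case by a direct combinatorial count. The constant $t \ge 3$ should emerge naturally from this reduction as the exact point at which the plane estimate forbids equality, so verifying that this threshold is both necessary and sufficient is where the care is needed.
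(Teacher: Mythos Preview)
The paper gives no proof of this corollary at all; it is simply announced as ``a natural consequence of Lemma~\ref{estimation}''. So at the level of ingredients you and the paper agree: the only tool in sight is the bound $\mult_Q(D)\le 3k$ from \eqref{pozaE} for $Q$ off the exceptional locus and $D\in|k\mathbb{L}_r|$. (Your opening sentence names \eqref{onE}, the bound \emph{on} the exceptional divisors; that is a slip, and you correctly switch to \eqref{pozaE} afterwards.)

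What you have written, however, is a plan rather than a proof, and the plan has a gap in the same place the paper glosses over. The sentence ``realizing multiplicities $m,m+1,m+2,m+3$ there \dots within curves of the same degree $3\alpha$ is impossible unless the curves degenerate'' is not correct as stated: a plane curve of degree $3\alpha$ can carry any multiplicity up to $3\alpha$ at a chosen point, so the mere existence of four such divisors is not obstructed. The upper bound you get from \eqref{pozaE} applied to a divisor realising $\alpha((m+t)Z)=k$ is only $m+t\le 3k$, i.e.\ $m\le 3(k-1)$. To reach a contradiction with $\alpha(mZ)=k$ you must produce a divisor in $|(k-1)\mathbb{L}_r|$ vanishing to order $m$ along \emph{all} of $Z$, and nothing in your outline (``degeneration argument'', ``known $\mathbb{P}^2$ result'') says what that divisor is or how to build it when $Z$ has several points. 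The line-configuration equality case of \eqref{pozaE} only kicks in when $m+t=3k$, which you have not arranged. This missing lower-bound construction is precisely the step that needs to be made explicit; the paper does not carry it out either (in the later proofs, e.g.\ of Remark~\ref{bound}, the inclusion $Z\subset E_1$ is again invoked by a bare appeal to Lemma~\ref{estimation}).
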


Now we turn to the main subject of this paper, namely a characterization of subschemes $Z$ with $$\alpha(Z)= \alpha(2Z) \cdots = \alpha(mZ)=1.$$ We begin with surfaces $\mathbb{S}_1$ and $\mathbb{S}_2$.

\subsection{Surfaces $\mathbb{S}_1$ and $\mathbb{S}_2$}

\begin{theorem} \label{s1.5jumps}
	Let $Z \subset \mathbb{S}_1$ be a finite set of points. Then the following conditions are equivalent
	\begin{itemize}
		\item[$i)$] $Z= \{ Q\} \subset E_1$,
		\item[$ii)$] $\alpha(Z)= \alpha(2Z)=\alpha(3Z)= \alpha(4Z)=\alpha(5Z)=1$.
	\end{itemize}
\end{theorem}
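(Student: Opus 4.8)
The plan is to first reduce condition $(ii)$ to the single statement $\alpha(5Z)=1$. For any nonempty $Z$ one has $\alpha(mZ)\geq 1$, while Fact \ref{basic}(1) guarantees that the initial sequence is weakly increasing; hence $\alpha(5Z)=1$ already forces $\alpha(Z)=\alpha(2Z)=\alpha(3Z)=\alpha(4Z)=\alpha(5Z)=1$. Since the $\alpha$-invariant on $\mathbb{S}_1$ is computed with respect to $\mathbb{L}_1=3H-E_1$, the equality $\alpha(5Z)=1$ means precisely that there is a divisor $D\in|3H-E_1|$ with $\mult_Q(D)\geq 5$ for every $Q\in Z$. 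I would then translate the problem to the plane through the adapted transform: writing $F=f_1(D)$, the image is a cubic through $P_1$ and $D=A(F)=\tilde F+(\mult_{P_1}(F)-1)E_1$, so the whole question becomes one about plane cubics through $P_1$ and the behaviour of their proper transforms along $E_1$.

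For the implication $(i)\Rightarrow(ii)$ I would exhibit an explicit divisor. Given $Q\in E_1\cong\mathbb{P}(T_{P_1}\mathbb{P}^2)$, let $\ell$ be the unique line through $P_1$ whose tangent direction at $P_1$ is the point $Q$, and set $F=\ell^3$. Then $\tilde\ell=H-E_1$, so $D:=3\tilde\ell+2E_1=3H-E_1\in|3H-E_1|$. Since $\tilde\ell$ is smooth and meets $E_1$ transversally exactly at $Q$, one computes $\mult_Q(D)=3\mult_Q(\tilde\ell)+2\mult_Q(E_1)=3+2=5$. This yields $\alpha(5Z)=1$ for $Z=\{Q\}$, hence all of $(ii)$.

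For $(ii)\Rightarrow(i)$ I would begin from Corollary \ref{onexceptional}: applied with $r=1$, $m=1$ and $t=4$, the chain $\alpha(Z)=\dots=\alpha(5Z)$ forces $Z\subset E_1$. Fix a single $D\in|3H-E_1|$ realising $\mult_Q(D)\geq 5$ for all $Q\in Z$ and put $F=f_1(D)$. For $Q\in E_1$, Lemma \ref{estimation} gives $\mult_Q(D)\leq 2\mult_{P_1}(F)-1\leq 2\cdot 3-1=5$, so equality must hold throughout; in particular $\mult_{P_1}(F)=3$. A plane cubic with a triple point at $P_1$ is a union $\ell_1\ell_2\ell_3$ of three lines through $P_1$, whence $D=\tilde\ell_1+\tilde\ell_2+\tilde\ell_3+2E_1$. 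The requirement $\mult_Q(D)=5$ forces all three proper transforms to pass through $Q$; since $\tilde\ell_i\cap E_1$ is the single point encoding the direction of $\ell_i$, this is possible only when $\ell_1=\ell_2=\ell_3$, and then $Q$ is the unique point $\tilde\ell\cap E_1$. As $F$ is fixed, every point of $Z$ must coincide with this same $Q$, so $Z=\{Q\}\subset E_1$.

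The main obstacle is this last geometric rigidity step, namely showing that the locus of multiplicity-$5$ points on a fixed $D$ reduces to a single point. It is here that the exact value $5$ (rather than, say, $4$) is decisive: it pins down $F=\ell^3$ and rules out configurations such as $\ell_1^2\ell_2$, which attain multiplicity only $4$ along $E_1$. Care is also needed to exploit that one and the same divisor $D$ must serve all of $Z$ simultaneously, since otherwise distinct points of $Z$ could a priori be accounted for by distinct cubics.
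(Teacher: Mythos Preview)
Your proof is correct and follows essentially the same route as the paper: both directions rest on analyzing the plane cubic $F=f_1(D)$ and showing that only $F=\ell^3$ with $\ell\ni P_1$ produces a point of multiplicity $5$ on $E_1$. Your direct invocation of Lemma~\ref{estimation} to force $\mult_{P_1}(F)=3$ is a mild streamlining of the paper's explicit case analysis over irreducible cubics, conic-plus-line, and three lines, but the underlying argument is the same.
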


\begin{proof}
	The implication from $i)$  to $ii)$ is obvious. It is enough to consider the nonreduced curve $F=3L \subset \mathbb{P}^2$ for some line passing through the point $P_1$. Indeed, it gives rise to
	$$A(F)=f^{*}F-E_1= 3\widetilde{L} + 2 E_1$$
	in $\mathbb{S}_1$, which vanishes to order $5$ along $Q \in \widetilde{L} \cap E_1$.

	In order to prove the reverse implication, let $Z=\{Q_{1}, \dots , Q_{s} \}$ and
	we assume that $D \in |3H-E_1|$ is a divisor satisfying ${\mult}_{Q_{i}} (D) \geq 5$ for all points $Q_{i} \in Z$. By Corollary \ref{onexceptional} we have  that $Z\subset E_1$.

Let us consider possible types of cubic curves in the projective plane and their adapted transforms. The curve $F$ has to pass through the point $P_1$ 
	 and in order to get the highest possible multiplicities along the exceptional
	divisor $E_1$ it should have the highest possible multiplicity at $P_1$. We have the following types of cubic curves in $\mathbb{P}^2$:
	\begin{itemize}
		\item[a)] an irreducible cubic (possibly singular);
		\item[b)] a union of an irreducible conic and a line;
		\item[c)] a union of three lines (possibly not distinct).
	\end{itemize}

	\noindent
	In the case $a)$ the divisor $A(F)$ on $\mathbb{S}_1$ has points of multiplicity at most two.
	In the case b) the highest possible multiplicity of a point on $E_1$ is three, this happens in the case
	when the line is tangent to the conic at point $P_1$.

	Let us pass to the case $c)$. We know that the adapted transform of a curve $F$
consisting of some triple line $L$ has a quintuple point. Except this arrangement of three lines we never get the quintuple points, which completes the proof.
\end{proof}

\begin{remark}\label{bound}
	In fact one can weaken condition $ii)$ in Theorem \ref{s1.5jumps}. Assuming 
	$$\alpha(mZ)=\alpha((m+1)Z)= \alpha((m+2)Z)= \alpha((m+3)Z)= \alpha((m+4)Z)$$
	for some $m \geq 1$, implies that $Z=\{Q\} \subset \mathbb{S}_1$.
\end{remark}
\begin{proof}
Let $Z= \{Q_1, \dots, Q_{t} \}$  be  such that  $\alpha(mZ)= \dots = \alpha((m+4)Z)=k$ for some integers $k$ and $t$,
and let $D \in |3kH- kE_1|$ be a divisor such that
${\mult}_{Q_{i}}(D) \geq m+4$ for any point $Q_{i} \in Z$. Firstly, by Lemma \ref{estimation} we conclude that $Z\subset E_1$. 

Let us denote by $F=f(D)$ having $\deg(F)= 3k$.
 Since $F$ is of degree $3k$ its multiplicity at $P_1$ is at most $3k$. Hence the multiplicity of $E_1$ in $D$ is at most $2k$. This contributes to the multiplicity of $D$ at every point $Q_1, \dots, Q_{t}$. The remaining multiplicity at these points must come from components of $F$ passing through $P_1$ at directions corresponding to  $Q_1, \dots, Q_{t}$. We have

\begin{equation}
\label{estim1}
t(m+4) \leq \sum_{i=1}^{t} mult_{Q_{i}} D \leq 3k+2kt.
\end{equation}
On the other hand, since $\alpha (mZ)=k$, it must be

\begin{equation}
\label{estim2}
3(k-1)+2(k-1)t <t \cdot m,
\end{equation}
since otherwise one could find $3(k-1)$ lines through $P_1$. Their images in  $\mathbb{P}^2$ would show $\alpha (mZ) \leq k-1$ contradicting the assumption. Combining (\ref{estim1}) and (\ref{estim2}) we get that
$$3k-3+2kt-2t+4t< 3k+2kt$$
and thus $t< \frac{3}{2}$, what finally means that $Z$ is a single point.

\end{proof}

On $\mathbb{S}_1$, there also exist infinitely many sets  satisfying a weaker condition, namely
$$\alpha(mZ)= \dots =\alpha((m+3)Z),$$
and these sets are not necessarily the same as in Theorem \ref{s1.5jumps}.

\begin{theorem} \label{s1.4jumps}
	Let $Z \subset \mathbb{S}_1$ be a finite set of points and let $m$ be a positive integer. Then the following conditions are equivalent:
	\begin{itemize}
		\item[$i)$] $\alpha(mZ)= \dots =\alpha((m+3)Z)$;
		\item[$ii)$] $Z= \{ Q\} \subset E_1$  or $Z=\{Q_1, Q_2 \} \subset E_1$, where $Q_1 \neq Q_2$.
	\end{itemize}
\end{theorem}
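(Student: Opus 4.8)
The plan is to prove the equivalence by establishing each implication separately, leaning on the techniques already developed for Theorem \ref{s1.5jumps} and Remark \ref{bound}. The implication $ii) \Rightarrow i)$ should be the routine direction: I would exhibit explicit divisors realizing the equalities. For $Z=\{Q\}\subset E_1$ this is already covered by Theorem \ref{s1.5jumps}, which gives five equalities and hence certainly four. For $Z=\{Q_1,Q_2\}\subset E_1$ with $Q_1\neq Q_2$, I would produce, for suitable $k$, a divisor $D\in|3kH-kE_1|$ with $\mult_{Q_1}(D)=\mult_{Q_2}(D)\geq m+3$ by taking the adapted transform of a plane cubic (or its $k$-th power) whose components are lines through $P_1$ chosen so that their proper transforms meet $E_1$ precisely at the two directions $Q_1,Q_2$; combining these line-components with the $2k$-fold contribution of $E_1$ itself gives the desired multiplicities while keeping the degree, so that $\alpha$ fails to jump over four consecutive steps.

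The substantive direction is $i)\Rightarrow ii)$. First, by Corollary \ref{onexceptional}, the hypothesis of three (indeed four) consecutive equalities forces $Z\subset E_1$, so I may assume all points of $Z$ lie on the exceptional divisor. The core is then a counting argument parallel to Remark \ref{bound}, but with the sharper bookkeeping needed to land on $t\leq 2$ rather than $t=1$. Writing $Z=\{Q_1,\dots,Q_t\}$ and taking $D\in|3kH-kE_1|$ with $\mult_{Q_i}(D)\geq m+3$, I would set $F=f(D)$ of degree $3k$, record that $E_1$ appears in $D$ with multiplicity at most $2k$, and split each $\mult_{Q_i}(D)$ into the uniform contribution from $E_1$ and the contribution from components of $F$ passing through $P_1$ in the direction $Q_i$. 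The degree-$3k$ constraint on $F$ bounds the total directional multiplicity, yielding an analogue of \eqref{estim1},
\begin{equation*}
t(m+3)\leq \sum_{i=1}^t \mult_{Q_i}(D)\leq 3k+2kt.
\end{equation*}
Simultaneously, minimality $\alpha(mZ)=k$ gives the strict lower bound on $tm$ coming from the nonexistence of a degree $3(k-1)$ configuration of lines through $P_1$ realizing multiplicity $m$ at all $t$ directions, as in \eqref{estim2}.

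Combining these two inequalities should collapse to a bound of the form $t< 3$, i.e. $t\leq 2$, which is exactly the gap between the four-jump and five-jump cases: the extra unit lost in passing from $m+4$ to $m+3$ loosens the final estimate from $t<3/2$ to $t<3$. The main obstacle, and the step I would treat most carefully, is the justification of the strict inequality \eqref{estim2} in this setting — ensuring that $\alpha(mZ)=k$ genuinely rules out $3(k-1)$ lines through $P_1$ distributing multiplicity $m$ across the $t$ prescribed directions, so that the arithmetic is airtight rather than merely suggestive. A secondary point requiring care is showing that $t=2$ is actually achievable \emph{only} when the two directions are distinct points of $E_1$ (so that no degeneration forces them to coincide), which ties the counting bound back to the precise geometric description in $ii)$; this amounts to checking that two distinct directions through $P_1$ correspond to two distinct points $Q_1\neq Q_2$ on $E_1$ and conversely.
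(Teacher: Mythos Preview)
Your proposal is correct and follows essentially the same approach as the paper. The paper handles $ii)\Rightarrow i)$ by simply asserting that the sets in $ii)$ satisfy the condition (citing $m=1$ and $m=4$ as witnesses), and for $i)\Rightarrow ii)$ it repeats verbatim the counting from Remark~\ref{bound} with $m+3$ in place of $m+4$, obtaining exactly your two inequalities and combining them to get $t<3$; your outline matches this in every structural detail, and the extra care you flag (justifying the strict inequality \eqref{estim2} and the distinctness of the two directions) is not elaborated further in the paper either.
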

\begin{proof}
	The sets in $ii)$ satisfy the condition
$$\alpha(mZ)= \dots =\alpha((m+3)Z),$$ for example with $m=1$ and $m=4$ respectively.
We will prove the opposite implication. Suppose now that $Z= \{Q_1, \dots, Q_{t} \}$  is a set such that  $\alpha(mZ)= \dots = \alpha((m+3)Z)=k$ for some integers $k$ and $t$ and let $D \in |3kH- kE_1|$ be a divisor such that
${\mult}_{Q_{i}}(D) \geq m+3$ for any point $Q_{i} \in Z$. Let us denote by $F=f(D)$, with $\deg(F)= 3k$. 

In fact we can repeat reasoning used in the proof of Remark \ref{bound}, but this time with the following estimates

\begin{equation}
\label{estim3}
t(m+3) \leq \sum_{i=1}^{t} mult_{Q_{i}} D \leq 3k+2kt,
\end{equation}

\begin{equation}
\label{estim4}
3(k-1)+2(k-1)t <t \cdot m.
\end{equation}
By (\ref{estim3}) combined with (\ref{estim4}) we get
$$3k-3+2kt-2t+3t< 3k+2kt,$$
what gives $t<3$.
\end{proof}

\begin{corollary}
For a finite set of points $Z \subset \mathbb{S}_1$  and a positive integer $m$ we have $$\alpha(mZ) < \alpha((m+5)Z).$$
\end{corollary}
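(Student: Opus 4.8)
The plan is to argue by contradiction, reusing almost verbatim the counting already carried out in Remark \ref{bound} and Theorem \ref{s1.4jumps}, but pushing the forced multiplicity one step further. Suppose that $\alpha(mZ)=\alpha((m+5)Z)=k$ for some $k$. Since the initial sequence is weakly growing (Fact \ref{basic}), all intermediate terms must coincide, so that
$$\alpha(mZ)=\alpha((m+1)Z)=\cdots=\alpha((m+5)Z)=k.$$
This is a run of six equal values, hence in particular more than $t\geq 3$ consecutive equalities, and Corollary \ref{onexceptional} then forces $Z=\{Q_1,\dots,Q_t\}\subset E_1$.

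Next I would fix a divisor $D\in|3kH-kE_1|$ with $\mult_{Q_i}(D)\geq m+5$ for every $i$, and push it down to $F=f(D)$ of degree $3k$. The same geometric observations as in the proof of Remark \ref{bound} apply: the multiplicity of $E_1$ in $D$ is at most $2k$, because $\mult_{P_1}(F)\leq 3k$, and the proper transform contributes in total at most $\mult_{P_1}(F)\leq 3k$ along $E_1$. This gives the upper bound
$$t(m+5)\leq \sum_{i=1}^{t}\mult_{Q_i}(D)\leq 3k+2kt.$$
Simultaneously, the minimality $\alpha(mZ)=k$ yields the lower bound $3(k-1)+2(k-1)t<tm$, exactly as in \eqref{estim2} and \eqref{estim4}: otherwise $3(k-1)$ lines through $P_1$ would realize multiplicity at least $m$ at each $Q_i$ in degree $k-1$, contradicting $\alpha(mZ)=k$.

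Combining the two displayed inequalities, the common terms $3k+2kt$ cancel and one is left with $-3-2t<-5t$, that is $t<1$, contradicting $t\geq 1$. Hence the assumption $\alpha(mZ)=\alpha((m+5)Z)$ is untenable, and since Fact \ref{basic} already gives $\alpha(mZ)\leq\alpha((m+5)Z)$, the inequality must be strict.

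The only genuine content beyond bookkeeping is the lower-bound inequality, which I would import directly from the proof of Remark \ref{bound} rather than reprove; the upper bound is the identical geometric input with the single numerical change of replacing $m+4$ by $m+5$. I expect no real obstacle here: the whole mechanism is that enlarging the forced multiplicity by one tips the balance $t<\tfrac{3}{2}$ of Remark \ref{bound} all the way down to $t<1$, ruling out even a single point and thereby upgrading the weak monotonicity of Fact \ref{basic} into the strict gap over five steps that is claimed.
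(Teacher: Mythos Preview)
Your proof is correct and follows essentially the same approach as the paper: the same contradiction setup, the same two inequalities \eqref{estim5} and \eqref{estim6}, and the same cancellation yielding $t<1$. If anything, your write-up is slightly more explicit (invoking Fact \ref{basic} and Corollary \ref{onexceptional} by name, and writing $m+5$ where the paper has a typographical slip), but there is no substantive difference.
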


\begin{proof}
    Suppose to the contrary that $Z= \{Q_1, \dots, Q_{t} \}$  is  such that  $\alpha(mZ)= \dots = \alpha((m+5)Z)=k$ for some integers $k$ and $t$, and let $D \in |3kH- kE_1|$ be a divisor such that
${\mult}_{Q_{i}}(D) \geq m+3$ for  $Q_{i} \in Z$. Let  $F=f(D)$, with $\deg(F)= 3k$. In the spirit of the proof of Remark \ref{bound}, we get estimates

\begin{equation}
\label{estim5}
t(m+5) \leq \sum_{i=1}^{t} mult_{Q_{i}} D \leq 3k+2kt,
\end{equation}

\begin{equation}
\label{estim6}
3(k-1)+2(k-1)t <t \cdot m.
\end{equation}
Combining (\ref{estim5}) and (\ref{estim6}) we obtain $t<1$, but $t$ is a positive integer, a contradiction.
\end{proof}

One may also consider sets with three  initial values equal to $1$. The list of possible types gets much longer but the arguments used to obtain their classification are similar to those used above. We refer the interested reader to \cite{thesis} (Subchapter $6.1$).

In further considerations lines joining points $P_1, \dots, P_r$ plsy an important role. From now on  we denote by $L_{ij}$ the line passing through the points $P_i$ and $P_j$ for fixed distinct $i,j \in \{1, \dots, r \}$. We are ready  to formulate analogous  results concerning the fattening effect on  $\mathbb{S}_2$.

\begin{theorem}
\label{5jumps}
The following conditions are equivalent:
\begin{itemize}
  \item[i)] $Z \subset \widetilde{L_{12}} \cap (E_1 \cup E_2);$
  \item[ii)] $\alpha(Z)= \dots =\alpha(5Z)=1.$
\end{itemize}
\end{theorem}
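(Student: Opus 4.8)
The plan is to prove the two implications separately: the forward direction $i) \Rightarrow ii)$ by an explicit construction, and the reverse $ii) \Rightarrow i)$ by analyzing the divisor that realizes the string of equalities.

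For $i) \Rightarrow ii)$ I would exhibit a single anticanonical divisor vanishing to order $5$ at both candidate points. Take the triple line $F = 3L_{12}$ in $\mathbb{P}^2$, for which $\mult_{P_1}(F) = \mult_{P_2}(F) = 3$. Its adapted transform (with $k=1$) is
$$A(F) = 3\widetilde{L_{12}} + 2E_1 + 2E_2 \in |3H - E_1 - E_2|.$$
At each of the two points $\widetilde{L_{12}} \cap E_1$ and $\widetilde{L_{12}} \cap E_2$ the component $3\widetilde{L_{12}}$ contributes $3$ and the relevant $2E_i$ contributes $2$, so $A(F)$ has multiplicity exactly $5$ there. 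Hence for any $Z \subseteq \widetilde{L_{12}} \cap (E_1 \cup E_2)$ this one divisor witnesses $\alpha(5Z) = 1$; since the initial sequence is weakly growing (Fact \ref{basic}) and consists of positive integers, it follows that $\alpha(Z) = \cdots = \alpha(5Z) = 1$.

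For $ii) \Rightarrow i)$ I would first invoke Corollary \ref{onexceptional}: the hypothesis $\alpha(Z) = \cdots = \alpha(5Z)$ is a string of five equal values (so $m = 1$ and $t = 4 \geq 3$), which forces $Z \subset E_1 \cup E_2$. Next, fix a divisor $D \in |3H - E_1 - E_2|$ realizing $\alpha(5Z) = 1$, so that $\mult_Q(D) \geq 5$ for every $Q \in Z$, and set $F = f(D)$, a plane cubic through $P_1$ and $P_2$. For a point $Q \in Z$ lying on $E_i$, inequality (\ref{onE}) of Lemma \ref{estimation} gives $5 \leq \mult_Q(D) \leq 2\mult_{P_i}(F) - 1$, whence $\mult_{P_i}(F) \geq 3$; since $\deg F = 3$ this forces $\mult_{P_i}(F) = 3$, so $F$ is a union of three lines through $P_i$.

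The heart of the argument, and the step I expect to be the main obstacle, is to upgrade ``three lines through $P_i$'' to ``the triple line $3L_{12}$'' and thereby pin down $Q$ exactly. Writing $F = L_1 + L_2 + L_3$ with all $L_j$ through $P_i$, the adapted transform has the form $A(F) = \widetilde{L_1} + \widetilde{L_2} + \widetilde{L_3} + 2E_i + c\,E_{3-i}$ for some $c \geq 0$; since $E_1 \cap E_2 = \emptyset$, at a point $Q \in E_i$ only the first four summands can contribute, giving $\mult_Q(D) = 2 + \#\{j : Q \in \widetilde{L_j}\}$. To reach the value $5$ all three proper transforms must pass through $Q$, i.e. the three lines share the tangent direction at $P_i$ encoded by $Q$; hence $L_1 = L_2 = L_3$ and $F = 3L$ for a single line $L$ through $P_i$. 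Because $F$ also passes through the other blown-up point $P_{3-i}$, the line $L$ passes through both $P_1$ and $P_2$, so $L = L_{12}$ and $Q = \widetilde{L_{12}} \cap E_i$. Applying this to each point of $Z$, according to whichever exceptional divisor it lies on, shows that every point of $Z$ equals one of the two points $\widetilde{L_{12}} \cap E_1$ or $\widetilde{L_{12}} \cap E_2$, which yields $Z \subseteq \widetilde{L_{12}} \cap (E_1 \cup E_2)$, as required.
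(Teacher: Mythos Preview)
Your proof is correct and follows essentially the same approach as the paper, which does not prove Theorem~\ref{5jumps} directly but declares it analogous to the $\mathbb{S}_1$ case (Theorem~\ref{s1.5jumps}): the forward direction via the adapted transform of the triple line, and the reverse via Corollary~\ref{onexceptional} plus an analysis of which plane cubics can produce a quintuple point on an exceptional divisor. Your use of inequality~(\ref{onE}) from Lemma~\ref{estimation} to force $\mult_{P_i}(F)=3$ at once is a mild streamlining of the paper's explicit case split over cubic types, and your multiplicity count $\mult_Q(D)=2+\#\{j:Q\in\widetilde{L_j}\}$ together with the observation that $F$ must also pass through $P_{3-i}$ cleanly pins down $F=3L_{12}$ and $Q=\widetilde{L_{12}}\cap E_i$; this is exactly the content of the paper's ``only the triple line gives a quintuple point'' conclusion, adapted to the presence of the second blown-up point.
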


\begin{theorem}
\label{4jumps}
The following conditions are equivalent:
\begin{itemize}
  \item[i)] $Z= \{Q\}  \subset  (E_1 \cup E_2) \setminus \widetilde{L_{12}};$
  \item[ii)] $\alpha(Z)= \dots =\alpha(4Z)=1$ and $\alpha(5Z)>1.$
\end{itemize}
\end{theorem}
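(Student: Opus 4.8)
The plan is to pivot the whole argument around Theorem~\ref{5jumps}, which already settles when the five initial values coincide, and around the local multiplicity identity extracted in the proof of Lemma~\ref{estimation}: for a divisor $D\in|3H-E_1-E_2|$ with image cubic $F=f(D)$ and a point $Q\in E_i$ one has
\begin{equation*}
\mult_Q(D)=\mult_{P_i}(F)-1+\mult_Q(\widetilde F),
\end{equation*}
together with the constraints $\mult_{P_i}(F)\le 3$ (degree of a cubic) and $\mult_Q(\widetilde F)\le\mult_{P_i}(F)$. Since here $k=1$, the class $\mathbb{L}_2=-K_{\mathbb{S}_2}=3H-E_1-E_2$ is exactly the system in which $\alpha(mZ)=1$ is tested, so every effective $D$ in it is the adapted transform $A(F)$ of a cubic $F$ through $P_1$ and $P_2$. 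I will also use that $\alpha(mZ)\ge 1$ for any nonempty $Z$, so that weak growth (Fact~\ref{basic}) collapses $\alpha(Z)\le\cdots\le\alpha(4Z)=1$ to a chain of equalities as soon as $\alpha(4Z)=1$.

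For the implication $i)\Rightarrow ii)$ I would argue by an explicit construction followed by a citation. Given $Q\in E_1\setminus\widetilde{L_{12}}$, let $L$ be the line through $P_1$ in the direction recorded by $Q$; since $Q$ is not the direction of $L_{12}$, the line $L$ misses $P_2$. Taking $F=2L+L_{12}$ gives a cubic with $\mult_{P_1}(F)=3$ and $\mult_{P_2}(F)=1$, whose adapted transform $A(F)=2\widetilde L+\widetilde{L_{12}}+2E_1$ has multiplicity $2\cdot 1+0+2\cdot 1=4$ at $Q$; the symmetric choice handles $Q\in E_2\setminus\widetilde{L_{12}}$. This yields $\alpha(4Z)=1$ and hence $\alpha(Z)=\cdots=\alpha(4Z)=1$. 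For the strict jump $\alpha(5Z)>1$, I would simply note that $Z=\{Q\}\not\subset\widetilde{L_{12}}\cap(E_1\cup E_2)$, so Theorem~\ref{5jumps} forbids $\alpha(5Z)=1$; combined with $\alpha(5Z)\ge\alpha(4Z)=1$ this forces $\alpha(5Z)>1$.

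The reverse implication $ii)\Rightarrow i)$ is where the real work lies. From $t=3\ge 3$ equalities, Corollary~\ref{onexceptional} gives $Z\subset E_1\cup E_2$. Fix $D\in|3H-E_1-E_2|$ with $\mult_Q(D)\ge 4$ for all $Q\in Z$ and set $F=f(D)$. For any $Q\in Z\cap E_i$ the identity above with $\mult_Q(\widetilde F)\le\mult_{P_i}(F)\le 3$ forces $\mult_{P_i}(F)=3$ and $\mult_Q(\widetilde F)\ge 2$; thus $F$ acquires a triple point at $P_i$, i.e. $F$ is a union of three lines through $P_i$. If $Z$ met both $E_1$ and $E_2$, then $F$ would have triple points at $P_1$ and $P_2$ simultaneously, which is only possible for $F=3L_{12}$, placing $Z$ inside $\widetilde{L_{12}}$ and contradicting $\alpha(5Z)>1$ via Theorem~\ref{5jumps}. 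Hence, up to the symmetry $P_1\leftrightarrow P_2$, we may assume $Z\subset E_1$. Now the intersection number $\widetilde F\cdot E_1=\mult_{P_1}(F)=3$ equals $\sum_{P\in E_1}\mult_P(\widetilde F)$, while each point of $Z$ contributes at least $2$; therefore $Z$ can contain only one point. Finally, were that point on $\widetilde{L_{12}}$, Theorem~\ref{5jumps} would again give $\alpha(5Z)=1$, so $Q\notin\widetilde{L_{12}}$, establishing $i)$.

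The main obstacle I anticipate is the bookkeeping in $ii)\Rightarrow i)$: justifying that $\mult_Q(\widetilde F)$ coincides with the local intersection number $(\widetilde F\cdot E_i)_Q$, so that the total $\widetilde F\cdot E_i=\mult_{P_i}(F)=3$ really caps the number of points of $Z$ carrying multiplicity $\ge 2$, and cleanly excluding the degenerate configuration $F=3L_{12}$ that drives both the two–exceptional–divisor case and the on–$\widetilde{L_{12}}$ case. Everything else reduces to the degree constraint on cubics and to invoking Theorem~\ref{5jumps} as a black box for the fifth value.
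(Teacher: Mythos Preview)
Your argument is correct and follows the same line the paper indicates (analysis of plane cubics via the adapted transform, combined with Theorem~\ref{5jumps} and Corollary~\ref{onexceptional}), since the paper itself gives no detailed proof for $\mathbb{S}_2$ and merely says to argue ``analogously as in the case of surface $\mathbb{S}_1$''. Your use of the global intersection number $\widetilde{F}\cdot E_1=\mult_{P_1}(F)=3$ to cap $|Z|$ is a clean substitute for the case-by-case enumeration of cubic types in the proof of Theorem~\ref{s1.5jumps}.

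One small correction: you write that $\widetilde{F}\cdot E_1$ \emph{equals} $\sum_{P\in E_1}\mult_P(\widetilde{F})$, but in general one only has $(\widetilde{F}\cdot E_1)_P\ge \mult_P(\widetilde{F})\cdot\mult_P(E_1)=\mult_P(\widetilde{F})$, with equality precisely when $E_1$ is not a component of the tangent cone of $\widetilde{F}$ at $P$. Fortunately you only need the inequality $3=\widetilde{F}\cdot E_1\ge\sum_{Q\in Z}\mult_Q(\widetilde{F})\ge 2|Z|$, so the conclusion $|Z|\le 1$ stands. You flagged this very point as your anticipated obstacle; it is not one---just replace ``equals'' by ``is at least'' and the bookkeeping goes through without further justification.
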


\begin{theorem}
\label{5boundary}
For any finite set of points $Z \subset \mathbb{S}_2$ and any positive integer $m$ we have $\alpha(mZ) < \alpha((m+5)Z)$.
\end{theorem}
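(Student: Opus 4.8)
The plan is to argue by contradiction, following the pattern of the proof of Remark \ref{bound} and of the corresponding corollary for $\mathbb{S}_1$, but now keeping careful track of the line $L_{12}$ joining the two blown-up points. Suppose $\alpha(mZ)=\alpha((m+5)Z)$ for some finite $Z$ and some $m$. By weak monotonicity (Fact \ref{basic}) the whole run $\alpha(mZ)=\alpha((m+1)Z)=\cdots=\alpha((m+5)Z)=k$ is constant, so Corollary \ref{onexceptional} applies (the run has length exceeding $3$) and forces $Z\subset E_1\cup E_2$. Write $Z=Z_1\sqcup Z_2$ with $Z_j=Z\cap E_j$, $t_j=|Z_j|$, $t=t_1+t_2$, and fix $D\in|k\mathbb{L}_2|=|3kH-kE_1-kE_2|$ with $\mult_{Q_i}(D)\ge m+5$ for every $Q_i\in Z$. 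Put $F=f(D)$, a plane curve of degree $3k$ with $\mult_{P_j}(F)\ge k$, and let $a_j=\mult_{P_j}(F)-k\ge 0$ be the coefficient of $E_j$ in $D=\widetilde F+a_1E_1+a_2E_2$.

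First I would bound the total multiplicity from above. On $E_1$ one has $\mult_{Q_i}(D)=a_1+\mult_{Q_i}(\widetilde F)$, and summing local intersection numbers gives $\sum_{Q_i\in Z_1}\mult_{Q_i}(\widetilde F)\le\widetilde F\cdot E_1=\mult_{P_1}(F)=a_1+k$; hence $\sum_{Z_1}\mult_{Q_i}(D)\le(t_1+1)a_1+k$, and symmetrically on $E_2$. Using $\mult_{Q_i}(D)\ge m+5$ this yields
\begin{equation*}
t(m+5)\ \le\ (t_1+1)a_1+(t_2+1)a_2+2k .
\end{equation*}
The coefficients are coupled through Bézout with $L_{12}$: writing $c$ for the multiplicity of $L_{12}$ in $F$ and removing it, $\mult_{P_1}(F)+\mult_{P_2}(F)\le 3k+c$, so $a_1+a_2\le k+c$ (together with $a_j\le 2k$, $c\le 3k$). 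The decisive point is that the $c$ copies of $\widetilde{L_{12}}$ meet $E_1\cup E_2$ only in the two special points $\widetilde{L_{12}}\cap E_j$, so this extra multiplicity can be harvested at a point of $Z$ only when that point is special; refining the sum bound accordingly caps $\mult_{Q_i}(D)$ by $5k$ at the special points and by $4k$ at all other points of $Z$ — exactly the dichotomy underlying Theorems \ref{5jumps} and \ref{4jumps}.

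For the lower bound I would use that $\alpha(mZ)=k$ and not $k-1$. As in the proof of Remark \ref{bound}, were $tm$ too small one could assemble an explicit divisor in $|(k-1)\mathbb{L}_2|$ from $(k-1)$-many copies of $L_{12}$ together with lines through $P_1$ and through $P_2$ in the directions prescribed by $Z_1$ and $Z_2$, achieving multiplicity $\ge m$ at every $Q_i$ and so witnessing $\alpha(mZ)\le k-1$, a contradiction. Carrying out this construction against the degree budget $3(k-1)$ produces an inequality of the shape $tm>\lambda_1t_1+\lambda_2t_2+\mu$, with $\lambda_j,\mu$ linear in $k$ and depending on whether the special points lie in $Z$. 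Combining it with the displayed upper bound eliminates $a_1,a_2,c$ and collapses to a numerical inequality forcing $t<1$, which is absurd. Concretely, in the tightest case $Z=\{\widetilde{L_{12}}\cap E_1,\ \widetilde{L_{12}}\cap E_2\}$ the upper bound reads $m+5\le a_1+a_2+k\le 2k+c\le 5k$, whereas $\alpha(mZ)=k$ forces $m\ge 5k-4$; these are incompatible, and the generic points (whose multiplicity at level $k$ is capped at $4k$ rather than $5k$) only widen the margin.

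The main obstacle is precisely this coupling of the two exceptional divisors through $L_{12}$: unlike on $\mathbb{S}_1$, the multiplicities at $P_1$ and $P_2$ are not independent, and the quantity $c=\mult_{L_{12}}(F)$ simultaneously relaxes the Bézout bound and raises the attainable multiplicity at the two special points. The bookkeeping must therefore separate the special points $\widetilde{L_{12}}\cap E_j$ from the generic points of $Z$ on each $E_j$ and propagate $c$ (splitting the analysis at $c<k$ versus $c\ge k$) through both the upper and the lower estimate; once this is done the resulting linear inequalities close up just as in the $\mathbb{S}_1$ corollary.
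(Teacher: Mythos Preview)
Your plan is correct and is precisely what the paper does: the paper gives no separate argument for Theorem \ref{5boundary} but states that Theorems \ref{5jumps}--\ref{4ingeneral} ``can be proved analogously as in the case of surface $\mathbb{S}_1$'', i.e., by the same contradiction, Corollary \ref{onexceptional}, and line-counting scheme you outline. Your sketch in fact goes further than the paper by isolating the $L_{12}$-coupling (via $a_1+a_2\le k+c$) as the only new feature over $\mathbb{S}_1$ and by checking the extremal configuration $Z\subset\widetilde{L_{12}}\cap(E_1\cup E_2)$ explicitly; the remaining case splits you describe do close up in the same way.
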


\begin{theorem}
\label{5ingeneral}
For any finite set of point $Z \subset \mathbb{S}_2$ the following conditions are equivalent:
\begin{itemize}
  \item[i)] there exists a positive integer $m$ such that $\alpha(mZ)= \dots =\alpha((m+4)Z);$
  \item[ii)] $Z \subset (E_1 \cup E_2) \cap \widetilde{L_{12}}$.
\end{itemize}
\end{theorem}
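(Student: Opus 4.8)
The implication (ii)$\Rightarrow$(i) is immediate from Theorem \ref{5jumps}: if $Z\subset \widetilde{L_{12}}\cap(E_1\cup E_2)$ then $\alpha(Z)=\dots=\alpha(5Z)=1$, which is (i) with $m=1$. For the converse I assume $\alpha(mZ)=\dots=\alpha((m+4)Z)=k$. This is a run of length $4\ge 3$, so Corollary \ref{onexceptional} already gives $Z\subset E_1\cup E_2$. It then remains to show that every point of $Z$ lies on $\widetilde{L_{12}}$; equivalently, that no point of $Z$ corresponds to a tangent direction at $P_1$ or $P_2$ other than that of $L_{12}$. I write $t_i^{\circ}$ for the number of points of $Z$ lying on $E_i$ but \emph{not} on $\widetilde{L_{12}}$, and the whole task is to prove $t_1^{\circ}=t_2^{\circ}=0$.

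For the upper bound I fix $D\in|3kH-kE_1-kE_2|$ with $\mult_Q(D)\ge m+4$ for all $Q\in Z$, put $F=f(D)$ of degree $3k$, and split off the line $L_{12}$ by writing $F=L_{12}^{\,s}\cdot G$ with $L_{12}\not\subset G$, $g_i:=\mult_{P_i}(G)$, so that $m_i:=\mult_{P_i}(F)=s+g_i$ and $g_1+g_2\le 3k-s$. The key tool is the decomposition $\mult_Q(D)=(m_i-k)+\mult_Q(\widetilde{F})$ from the proof of Lemma \ref{estimation}, combined with the fact that $\widetilde{L_{12}}$ meets $E_i$ only at the distinguished point $\widetilde{L_{12}}\cap E_i$. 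Hence a point $Q\in E_i\setminus\widetilde{L_{12}}$ receives proper-transform multiplicity only from $\widetilde{G}$, and summing over such points gives $\sum_Q\mult_Q(\widetilde{F})=\sum_Q\mult_Q(\widetilde{G})\le\widetilde{G}\cdot E_i=g_i$. This yields $t_i^{\circ}(m+4)\le t_i^{\circ}(m_i-k)+g_i$; maximizing the right-hand side subject to the effectivity constraints $m_1,m_2\ge k$ and $g_1+g_2\le 3k-s$ (the maximum being attained by $F=kL_{12}+2k\,(\text{lines through }P_i)$, i.e. $s=k,\ g_i=2k,\ g_{j}=0$) gives the fundamental estimate
$$t_i^{\circ}(m+4)\le 2k\,(t_i^{\circ}+1),\qquad i=1,2.$$
In particular, if some $t_i^{\circ}\ge 1$ then $m+4\le 4k$, i.e. $m\le 4(k-1)$.

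For the matching lower bound I exploit that $\alpha(mZ)=k$ forbids any divisor in $|3(k-1)H-(k-1)E_1-(k-1)E_2|$ of multiplicity $\ge m$ along $Z$; a contradiction will follow by exhibiting one. The building blocks are copies of $L_{12}$ — doubly efficient, since each raises $\mult_{P_1}$ and $\mult_{P_2}$ at once and passes through both distinguished points — and, for each point $Q\in Z$ off $\widetilde{L_{12}}$, copies of the line through the corresponding $P_i$ in the direction of $Q$. Using $k-1$ copies of $L_{12}$ to secure the required multiplicity at the \emph{other} centre $P_j$ and distributing the remaining $2(k-1)$ units of degree among the off-line directions, the construction serves the off-line points on $E_i$ at level $2(k-1)+a$ with $\sum a=2(k-1)$; it succeeds (forcing the contradiction $\alpha(mZ)\le k-1$) unless one has the reverse inequality $t_i^{\circ}m>2(k-1)(t_i^{\circ}+1)$. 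Combining this with the upper estimate above gives $t_i^{\circ}<1$, whence $t_1^{\circ}=t_2^{\circ}=0$ and $Z\subset\widetilde{L_{12}}\cap(E_1\cup E_2)$, as required.

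The delicate point — and the step demanding the most care — is precisely this lower bound. Conceptually, the per-divisor estimates say that a single off-line point caps the growth rate of $m\mapsto\alpha(mZ)$ at $4$ (against the rate $5$ of the distinguished points, which are served to multiplicity $5k$ by the triple line $3kL_{12}$), and a rate below $5$ cannot yield five equal consecutive values. Turning this into a clean contradiction, however, requires a configuration realizing multiplicity $m$ at \emph{all} of $Z$ within the single degree budget $3(k-1)$: off-line points on $E_1$ compete for lines through $P_1$, those on $E_2$ for lines through $P_2$, and $L_{12}$ is the only shared resource. One cannot reduce to a subscheme of off-line points, because restriction preserves only the inequalities of Fact \ref{basic}, not the constancy of the run; the allocation must therefore be solved globally. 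Handling the coupling of the two exceptional divisors through $L_{12}$, together with the case $L_{12}\subset F$ (where $m_1+m_2$ may exceed $3k$, reaching up to $3k+s$), is exactly where the argument is most technical.
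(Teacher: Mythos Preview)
Your strategy mirrors the paper's: an upper estimate from the divisor $D$ realising $\alpha((m+4)Z)=k$, a lower estimate from an explicit construction contradicting $\alpha(mZ)=k$, then squeeze. The paper gives no detailed argument for this theorem, saying only that it is proved ``analogously as in the case of surface $\mathbb{S}_1$'' (i.e.\ Remark~\ref{bound}) and deferring to \cite{thesis}; your write-up is already more explicit than the paper, and the upper bound $t_i^{\circ}(m+4)\le 2k(t_i^{\circ}+1)$ via the splitting $F=L_{12}^{\,s}G$ is correct and is the right device for isolating the off-line contribution.

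The gap is exactly where you yourself flag it. Your lower-bound divisor $(k-1)L_{12}+\sum_Q a_Q L_Q$, with all $L_Q$ through $P_i$, lies in $|(k-1)\mathbb{L}_2|$ only because the $k-1$ copies of $L_{12}$ supply the required multiplicity at the other centre $P_j$; but then its adapted transform has $E_j$-coefficient $0$, so it meets an off-line point of $E_j$ with multiplicity $0$ and meets $Q_j^{*}=\widetilde{L_{12}}\cap E_j$ with multiplicity only $k-1$. Hence as soon as $Z$ touches $E_j$ your construction does \emph{not} have multiplicity $\ge m$ along all of $Z$ (you only know $m\le 4(k-1)$, not $m\le k-1$), and no contradiction with $\alpha(mZ)=k$ follows. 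You correctly observe that one cannot pass to the off-line subscheme on $E_i$, since Fact~\ref{basic} preserves only inequalities, not the five-term run; but then the inequality $t_i^{\circ}m>2(k-1)(t_i^{\circ}+1)$ is asserted for the full $Z$ without justification. Your final paragraph diagnoses the coupling through $L_{12}$ precisely, but diagnosis is not proof: the ``most technical'' step is in fact missing. A complete argument must run the upper and lower estimates \emph{jointly} over both exceptional divisors, building a single test curve of degree $3(k-1)$ that budgets lines through $P_1$, lines through $P_2$, and copies of $L_{12}$ simultaneously so as to serve every point of $Z$.
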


\begin{theorem}
\label{4ingeneral}
Let $Z \subset \mathbb{S}_2$ be a finite set of points such that $Z \subset E_{i}$ for $i \in \{1,2 \}$. Then $Z$ satisfies the condition
\begin{equation}
\label{4.equality}
\alpha(mZ)= \dots = \alpha((m+3)Z)< \alpha((m+4)Z),
\end{equation}
for some positive integer $m$ if and only if $Z$ has the following form: either

\begin{itemize}
\item[a)] $Z= \{ Q \} \subset E_{i} \setminus \widetilde{L_{12}}$ for $i \in \{ 1,2 \}$, or
\item[b)] $Z= \{ Q,Q' \} \subset E_{i}$ for $i \in \{ 1,2 \}$, where $Q' \in \widetilde{L_{12}}$.
\end{itemize}
\end{theorem}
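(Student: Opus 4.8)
My plan is to reduce the statement to a computation of the growth rate of the initial sequence $v \mapsto \alpha(vZ)$ on $\mathbb{S}_2$ and then to read off the lengths of its maximal plateaux; condition \eqref{4.equality} (for some $m$, read as a two-sided maximal plateau — see the last paragraph) then says precisely that some plateau has length $4$. After relabelling I assume $Z \subset E_1$, and for $Q \in E_1$ I write $\ell_Q$ for the line through $P_1$ in the direction $Q$. The whole computation rests on one estimate and one ``budget''. For $D \in |k\mathbb{L}_2|$ with $F = f(D)$, peeling the $\ell_Q$-component off $F$ and intersecting the residual curve with $\widetilde{\ell_Q}$ (class $H - E_1$) gives
$$\mult_Q(D) \le \mult_{\ell_Q}(F) + 2k .$$
The budget is forced by effectivity: since $L_{12}$ is the only line through $P_1$ that also meets $P_2$, the condition $\mult_{P_2}(F) \ge k$ makes the off-line line-multiplicities satisfy $\sum_{Q \notin \widetilde{L_{12}}} \mult_{\ell_Q}(F) \le 2k$, while $\mult_{L_{12}}(F)$ is capped only by $3k$; this asymmetry is the entire role played by $\widetilde{L_{12}}$.

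For the sufficiency direction I treat the two shapes in (a), (b) separately. A single point $Q \in E_1 \setminus \widetilde{L_{12}}$ is the situation of Theorem \ref{4jumps}, which with $m = 1$ gives $\alpha(Z) = \dots = \alpha(4Z) = 1 < \alpha(5Z)$, i.e. \eqref{4.equality}. For $Z = \{Q, Q'\}$ with $Q \notin \widetilde{L_{12}}$, $Q' \in \widetilde{L_{12}}$, I take $F = a\,L_{12} + b\,\ell_Q + (\text{further lines through }P_1)$ of degree $3k$ with all components through $P_1$; then $\mult_{Q'}(D) = a + 2k$ and $\mult_Q(D) = b + 2k$, and optimising over $a + b \le 3k$, $a \ge k$ yields simultaneous multiplicity $\lfloor 7k/2 \rfloor$, so $\alpha(vZ) = \lceil 2v/7 \rceil$. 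This sequence has maximal plateaux of lengths alternating between $3$ and $4$, and taking $m$ at the start of a length-$4$ plateau gives \eqref{4.equality}.

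For the necessity direction I compute the slope for an arbitrary $Z \subset E_1$ with $p := |Z \setminus \widetilde{L_{12}}|$ off-line points and $q := |Z \cap \widetilde{L_{12}}| \in \{0,1\}$ on-line points. Feeding ``all $\mult_{Q_i}(D) \ge v$'' into the displayed estimate and the budget (together with the degree bound $\mult_{L_{12}}(F) + \sum_{\text{off}} \mult_{\ell_Q}(F) \le 3k$) bounds $v$ linearly in $k$, and the arrangements of paragraph 2 show the bound is attained; the outcome is $\alpha(vZ) = \lceil s v \rceil$ (up to the usual rounding) with slope $s = p/(2(p+1))$ when $q = 0$ and $s = (p+1)/(2p+5)$ when $q = 1$. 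The maximal plateau length is $\lceil 1/s \rceil$, which equals $4$ exactly when $s \in [\tfrac14, \tfrac13)$, and a direct check shows this happens only for $(p,q) = (1,0)$ (slope $\tfrac14$, case (a)) and $(p,q) = (1,1)$ (slope $\tfrac27$, case (b)). Every other configuration is excluded: $(0,1)$ has slope $\tfrac15$ and plateau length $5$ (the single on-line point of Theorem \ref{5jumps}), while any $p \ge 2$ forces $s \ge \tfrac13$ and plateau length $\le 3$; in particular this also re-proves $|Z| \le 2$. This establishes the equivalence.

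The step I expect to be the main obstacle is the sharpness of the displayed estimate and of the budget, i.e. that the line arrangements above are genuinely optimal. The clean route is Bezout against the strict transforms $\widetilde{\ell_Q}$ and $\widetilde{L_{12}}$; the on-line case additionally needs the input $\mult_{P_2}(F) \ge \mult_{L_{12}}(F)$ to sharpen $\mult_{Q'}(D)$ down to $\mult_{L_{12}}(F) + 2k$ (rather than the cruder $5k$ coming from $\mult_{Q'}(\widetilde F) \le \mult_{P_1}(F)$). The genuinely delicate bookkeeping arises when several of these distinguished lines are themselves components of $F$: I would peel them off one at a time and induct on the residual curve, tracking $\mult_{P_1}$ and $\mult_{P_2}$ at each stage. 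A final point to settle at the outset is the reading of \eqref{4.equality}: it must be taken as a two-sided maximal plateau of length $4$ (one also demands $\alpha((m-1)Z) < \alpha(mZ)$), so that the length-$5$ plateau of the single on-line point is correctly attributed to Theorem \ref{5jumps} and not to \eqref{4.equality}.
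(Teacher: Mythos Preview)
Your approach is sound and the key computations are correct: the Bezout estimate $\mult_Q(D)\le \mult_{\ell_Q}(F)+2k$ (obtained by intersecting the residual $\widetilde G$ with $\widetilde{\ell_Q}\sim H-E_1$) and the off-line budget $\sum_{Q\notin\widetilde{L_{12}}}\mult_{\ell_Q}(F)\le 2k$ (forced by $\mult_{P_2}(F)\ge k$) are exactly the right ingredients, and the explicit line arrangements realise them sharply. Two remarks are worth making.

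First, your worry about sharpness is unfounded: the displayed inequality is \emph{exactly} what Bezout against $\widetilde{\ell_Q}$ gives, with no peeling or induction needed; and the budget is immediate from $\deg G\ge\mult_{P_2}(G)\ge k$ after stripping the off-line $\ell_Q$-components. Conversely, your slope formula $s=(p+1)/(2p+5)$ for $q=1$ is only a \emph{lower} bound on the true slope once $p\ge 3$ (the construction then fails to meet the $\mult_{P_2}\ge k$ constraint with $a=3k/(p+1)<k$), but this is harmless: for $p\ge 2$ the off-line budget alone already gives $2(v-2k)\le 2k$, hence $s\ge\tfrac13$ and plateau length $\le 3$, which is all you need. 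Your observation about reading \eqref{4.equality} as a two-sided maximal plateau is also correct and necessary: as literally written, the single on-line point satisfies \eqref{4.equality} with $m=2$.

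Second, on the comparison with the paper: the paper does not prove this theorem directly but refers to the $\mathbb{S}_1$ argument (Remark~\ref{bound} and Theorem~\ref{s1.4jumps}), which combines the global bound $t(m+3)\le 3k+2kt$ with the minimality inequality $3(k-1)+2(k-1)t<tm$ to force $t<3$. Your route is genuinely different: rather than bounding $|Z|$ uniformly, you compute the exact asymptotic slope of $v\mapsto\alpha(vZ)$ for each configuration type and read off plateau lengths. This buys you more --- you recover the entire initial sequence, not just a cardinality bound --- and it makes the on-line/off-line asymmetry (which has no $\mathbb{S}_1$ analogue and is precisely why two generic points on $E_i$ do \emph{not} appear in case~(b)) completely transparent. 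The paper's counting argument, transported verbatim to $\mathbb{S}_2$, would need this same refinement to separate $(p,q)=(2,0)$ from $(1,1)$.
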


All theorems from \ref{5jumps} to \ref{4ingeneral} can be proved analogously as in the case of surface $\mathbb{S}_1$, or the reader can find alternative proofs in \cite{thesis}. In the subchapter $2.4$ of \cite{thesis} reader can also find a description of sets satisfying the condition $$\alpha(Z) =\alpha(2Z) =\alpha(3Z) =1.$$

Question about the maximal integer  $a$ satisfying condition
$$\alpha(mZ)= \alpha((m+1)Z) \cdots = \alpha((m+a)Z)$$ is not trivial and it is still an open problem for some of studied before surfaces. For example, fattening effect for the Hirzebruch surfaces is described in \cite{DLS13} only with respect to the condition 
$$\alpha(Z) = \dots =\alpha(mZ) =1.$$ The  available tools are not useful in the case of greater $r$. In particular, when considered sets have some points on the exceptional divisors. From that reason for remaining surfaces $\mathbb{S}_r$ we only establish maximal $m$, for which $\alpha(Z) = \dots =\alpha(mZ) =1$ hold and we describe sets $Z$ with that property.

\subsection{Surfaces $\mathbb{S}_r$ for $r \geq 3$}

The natural sequence of inclusions between linear systems 
$$|\mathbb{L}_{1}| \supset  |\mathbb{L}_{2}| \supset |\mathbb{L}_{3}| \supset  |\mathbb{L}_{4}| \supset  |\mathbb{L}_{5}| \supset  |\mathbb{L}_{6}| \supset  |\mathbb{L}_{7}| \supset  |\mathbb{L}_{8}|$$
suggests, that the sequence of equalities
$$\alpha(Z)= \alpha(2Z)= \dots =1$$
should become shorter with $r$ growing. In the case of $r=1$ and $r=2$ we had $\alpha(5Z)=1$ and  we proved moreover, that there it is not possible to obtain more than five consecutive initial values equal. 

We present now such a characterization for remaining surfaces $\mathbb{S}_r$.

\begin{theorem}
\label{s3.4jumps}
Let $Z\subset \mathbb{S}_3$ be a finite set of points. The following conditions are equivalent:
\begin{itemize}
  \item[$i)$] $\alpha(Z) = \dots = \alpha(4Z)=1$;
  \item[$ii)$] $Z= \{Q \}= E_{i} \cap \widetilde{L_{ij}}$ for distinct $i, j \in \{1,2,3 \}.$
\end{itemize}
\end{theorem}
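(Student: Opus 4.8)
The plan is to prove the two implications separately, the forward direction $i) \Rightarrow ii)$ being the substantial one. For the easy converse $ii) \Rightarrow i)$, I would exhibit a single anticanonical divisor witnessing $\alpha(4Z)=1$. Taking without loss of generality $Q = E_1 \cap \widetilde{L_{12}}$, consider the cubic $F = 2L_{12} + L_{13}$ in $\mathbb{P}^2$, which passes through $P_1,P_2,P_3$ with $\mult_{P_1}(F)=3$. Its adapted transform $A(F)\in |3H - E_1 - E_2 - E_3|$ satisfies $\mult_Q(A(F)) = \mult_{P_1}(F) - 1 + \mult_Q(\widetilde{F}) = 3 - 1 + 2 = 4$, since the double line $2\widetilde{L_{12}}$ passes through $Q$ while $\widetilde{L_{13}}$ meets $E_1$ elsewhere. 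Hence $\alpha(4Z)=1$, and because $1 \leq \alpha(Z) \leq \alpha(2Z) \leq \alpha(3Z) \leq \alpha(4Z) = 1$ by the weak growth in Fact \ref{basic}, all five initial values equal $1$.

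For the hard direction $i) \Rightarrow ii)$, I would first apply Corollary \ref{onexceptional} with $m=1$ and $t=3$ to conclude $Z \subset E_1 \cup E_2 \cup E_3$. Next, choose $D \in |3H - E_1 - E_2 - E_3|$ realizing $\alpha(4Z)=1$, so $\mult_Q(D) \geq 4$ for every $Q \in Z$, and set $F = f(D)$, a plane cubic through $P_1,P_2,P_3$. For any $Q \in Z \cap E_i$, the bound (\ref{onE}) of Lemma \ref{estimation} with $k=1$ forces $4 \leq 2\mult_{P_i}(F) - 1$, hence $\mult_{P_i}(F) = 3$; that is, $F$ has a triple point at $P_i$ and so is a union of three lines through $P_i$.

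The core of the argument is then a rigidity analysis of this single cubic $F$. Since a cubic with a triple point at $P_i$ consists of three lines through $P_i$, having triple points at two distinct centers $P_i, P_{i'}$ would force all three lines to pass through both, giving $F = 3L_{i i'}$, which cannot pass through the third general point; thus $Z$ meets exactly one exceptional divisor, say $E_1$. Using the equality $\mult_Q(D) = \mult_{P_1}(F) - 1 + \mult_Q(\widetilde{F})$ from the proof of Lemma \ref{estimation}, the condition $\mult_Q(D) \geq 4$ yields $\mult_Q(\widetilde{F}) \geq 2$, meaning the direction at $P_1$ corresponding to $Q$ carries a line of multiplicity at least two in $F$. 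As $F = 3\ell$ is excluded by general position (it would miss two of the three points), $F$ must have the shape $2\ell + \ell'$ with $\ell \neq \ell'$, so there is a \emph{unique} such direction; this pins $Z$ down to a single point $Q$.

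Finally, writing $F = 2\ell + \ell'$ and imposing that $F$ pass through $P_2$ and $P_3$ forces (again by general position, which excludes $\ell$ or $\ell'$ through two of the $P_i$) the pair $\{\ell,\ell'\} = \{L_{1j}, L_{1k}\}$ with the double line $\ell = L_{1j}$, whence $Q = E_1 \cap \widetilde{L_{1j}}$, exactly form $ii)$. I expect this concluding rigidity step to be the main obstacle: it is where the general position of $P_1,P_2,P_3$ must be invoked repeatedly to rule out the various degenerate shapes of the triple-point cubic and to isolate the unique admissible geometry.
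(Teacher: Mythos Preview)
Your proposal is correct and follows essentially the same approach as the paper, which indicates that the proof proceeds by a review of plane cubics through $P_1,P_2,P_3$ analogous to the proof of Theorem~\ref{s1.5jumps}. Your use of the bound~(\ref{onE}) to jump directly to the three-lines-through-$P_i$ case, bypassing the explicit case split into irreducible cubic / conic-plus-line / three lines, is a mild streamlining of that same review but not a different route.
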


\begin{theorem}
\label{s4.3jumps}
Let $Z \subset \mathbb{S}_4$ be a finite set of points. Then $Z$ satisfies equality $\alpha(Z)=\alpha(2Z)=\alpha(3Z)=1$, if and only if
it is one of the following sets:
\begin{itemize}
  \item[a)] $Z= \{Q \} \subset \widetilde{L_{ij}}$ for distinct $i,j \in \{1,2,3,4 \}$,
  \item[b)] $Z \subset \{Q, Q_1, Q_2\} \subset \widetilde{L_{ij}}$, where $Q_1 \in E_{i}$, $Q_2 \in E_{j}$ and
  $Q = \widetilde{L_{ij}} \cap \widetilde{L_{kl}}$ for pairwise distinct $i,j,k,l \in \{1,2,3,4 \}$,
  \item[c)] $Z \subset E_{i} \cap (\widetilde{L_{ij}} \cup \widetilde{L_{il}} \cup \widetilde{L_{ik}})$
  for pairwise distinct $i,j,k,l \in \{1,2,3,4 \}$,
  \item[d)] $Z= \{Q \} \subset E_{i}$ and $Q \in \widetilde{C} \cap \widetilde{L} $, where $C$ is an  irreducible conic curve passing
  through the points   $P_1, P_2, P_3, P_4$ and $L$ is the line tangent to $C$ at the point $P_{i}$ for $i \in \{1,2,3,4 \}$.
\end{itemize}
\end{theorem}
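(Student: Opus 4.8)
The plan is to translate the statement into plane geometry and then run the same kind of case analysis on cubics that was used for $\mathbb{S}_1$. Since the initial sequence is nondecreasing and consists of positive integers (Fact~\ref{basic} and the remark preceding it), the chain $\alpha(Z)=\alpha(2Z)=\alpha(3Z)=1$ is equivalent to the single equality $\alpha(3Z)=1$; by Definition~\ref{alfa} this says that there is a divisor $D\in|\mathbb{L}_4|=|3H-E_1-E_2-E_3-E_4|$ with $\mult_Q(D)\geq 3$ for all $Q\in Z$. Setting $F=f(D)$, the curve $F$ is a plane cubic through $P_1,\dots,P_4$ and $D=A(F)$ is its adapted transform, so that $A(F)=\widetilde{F}+\sum_i(m_i-1)E_i$ with $m_i=\mult_{P_i}(F)$. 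For the direction asserting that each set in (a)--(d) satisfies the equality I would exhibit one such cubic in each family and read off the multiplicities from this formula: for (a) the three concurrent lines $L_{ij}\cup\overline{f(Q)P_k}\cup\overline{f(Q)P_l}$ with a triple point at $f(Q)\in L_{ij}$; for (b) the cubic $2L_{ij}+L_{kl}$; for (c) the three lines $L_{ij}\cup L_{ik}\cup L_{il}$ through $P_i$; and for (d) the cubic $C\cup L$. In each case a one-line computation gives multiplicity exactly $3$ at the listed points.

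For the converse I would classify all cubics $F$ through the four points (which are in general position, so no three are collinear) that can create a point of multiplicity $3$ in $A(F)$. Lemma~\ref{estimation} supplies the two governing constraints. If $Q\notin E_1\cup\cdots\cup E_4$ then $\mult_{f(Q)}(F)=3$, forcing $F$ to be a union of three lines through $f(Q)$; if $Q\in E_i$ then $\mult_{P_i}(F)\geq 2$, so $F$ is singular at $P_i$, and on $E_i$ the blow-up formula from the proof of Lemma~\ref{estimation} reads $\mult_Q(A(F))=(m_i-1)+\mult_Q(\widetilde{F})$. I would then run through the possible types — an irreducible cubic, an irreducible conic plus a line, three distinct lines, and a double line plus a line — and in each type compute the locus $\{Q:\mult_Q(A(F))\geq 3\}$ using only the tangent directions of the branches at each $P_i$ and transversality elsewhere. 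This is where the four geometric families appear: three concurrent lines give (c) when the centre is some $P_i$ and (a) when it is an ordinary point of a line $L_{ij}$; the double line $2L_{ij}+L_{kl}$ gives the three points on $\widetilde{L_{ij}}$ of (b); and a conic together with its tangent at $P_i$ gives the single point of (d).

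The main obstacle will be to show that no other type contributes, and to pin down case (d) correctly. For an irreducible cubic the singularity at $P_i$ is at worst a node or a cusp, for which $\mult_Q(\widetilde{F})\leq 1$ and hence $\mult_Q(A(F))\leq 2$; reaching $3$ would require two branches sharing a tangent and staying together after one blow-up, i.e.\ a tacnode, which is impossible on an irreducible cubic because its total $\delta$-invariant is at most its arithmetic genus $1$. For a genuine triangle of three non-concurrent lines the components have pairwise distinct tangents at every common point, so on each $E_i$ and at every ordinary node one finds multiplicity at most $2$; thus triangles contribute nothing. The most delicate step is (d): a line $L$ through $P_i$ raises the multiplicity on $E_i$ to $3$ \emph{precisely} when $L$ is tangent to $C$ at $P_i$, for only then do $\widetilde{C}$ and $\widetilde{L}$ meet $E_i$ at a common point, contributing $1+1+1$; and since such a tangent line meets $C$ only at $P_i$, it passes through no other $P_j$, so the configuration is exactly the one in (d). Finally, because all points of $Z$ must have multiplicity $\geq 3$ for a single cubic $F$, the set $Z$ lies in the admissible locus of one cubic, and collecting the loci above shows that $Z$ is a subset of one of the sets (a)--(d), completing the classification.
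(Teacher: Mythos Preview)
Your proposal is correct and follows exactly the approach the paper indicates: translate $\alpha(3Z)=1$ into the existence of a plane cubic $F$ through $P_1,\dots,P_4$ whose adapted transform has points of multiplicity $\ge 3$, then run the case analysis over the possible types of $F$ (irreducible cubic, conic $+$ line, three lines concurrent or not, double line $+$ line). The paper itself gives no details here beyond saying the argument is ``analogous to Theorem~\ref{s1.5jumps}'' and referring to the thesis, so your write-up in fact supplies more than the paper does; the one small point you leave implicit is why the centre of three concurrent lines must lie on some $L_{ij}$ (otherwise each of the three lines through the centre contains at most one $P_i$, covering at most three of the four points), but this is immediate.
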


\begin{theorem}
Let $Z \subset \mathbb{S}_5$ be a finite set of points. Then the condition $\alpha(Z)= \alpha(2Z)= \alpha(3Z)=1$ is fulfilled if and only if
$Z= \{Q\}$ and the point $Q$ satisfies one of the following two conditions:
\begin{itemize}
\item[a)] $Q \in E_{i} \cap \widetilde{L_{i}} \cap \widetilde{C}$ for $i \in \{1,2,3,4,5 \}$, where $C$ is a conic passing through the points
$P_1, \dots, P_5$ and $L_{i}$ is a line tangent to $C$ at the point $P_{i}$,
\item[b)] $Q \in \widetilde{L_{ij}} \cap \widetilde{L_{kl}}$ for pairwise distinct $i,j,k,l \in \{1,2,3,4,5 \}$.
\end{itemize}
\end{theorem}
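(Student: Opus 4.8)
The plan is to collapse the three equalities into the single condition $\alpha(3Z)=1$ and then read off the geometry of the anticanonical cubic that realizes it. Since the initial sequence is weakly increasing by Fact \ref{basic} and $\alpha(Z)\geq 1$ for any nonempty $Z$, the chain $\alpha(Z)=\alpha(2Z)=\alpha(3Z)=1$ is equivalent to $\alpha(3Z)=1$, i.e.\ to the existence of a divisor $D\in|\mathbb{L}_{5}|=|3H-E_{1}-\dots-E_{5}|$ with $\mult_{Q}(D)\geq 3$ for every $Q\in Z$. For the implication $(ii)\Rightarrow(i)$ I would simply exhibit $D$: in case $a)$ take $D=E_{i}+\widetilde{L_{i}}+\widetilde{C}$, the sum of the exceptional curve, of the proper transform of the tangent line, and of the proper transform of the conic, all of which pass through $Q$; in case $b)$ take $D=\widetilde{L_{ij}}+\widetilde{L_{kl}}+\widetilde{L}$, where $L$ is the line through the fifth point $P_{m}$ and the point $f_{5}(Q)=L_{ij}\cap L_{kl}$. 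In both cases a one-line class computation gives $D\in|\mathbb{L}_{5}|$, and the three components are concurrent at $Q$, so $\mult_{Q}(D)=3$.

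For the converse I would set $F=f_{5}(D)$, a plane cubic through $P_{1},\dots,P_{5}$, and split according to whether $Q$ lies on the exceptional locus. Note that Corollary \ref{onexceptional} does \emph{not} apply here, since it requires $t\geq 3$ consecutive equalities whereas here $t=2$; this is exactly why points off $E_{1}\cup\dots\cup E_{5}$ survive. If $Q\notin E_{1}\cup\dots\cup E_{5}$, then by \eqref{pozaE} of Lemma \ref{estimation} we have $\mult_{f_{5}(Q)}(F)=\mult_{Q}(D)\geq 3=\deg F$, so equality holds and, by the last assertion of that lemma, $F$ is a union of three lines through $P:=f_{5}(Q)$. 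As the $P_{j}$ are in general position, no line carries three of them, so covering five points by three concurrent lines forces the partition $(2,2,1)$: two lines are $L_{ij}$, $L_{kl}$ with $\{i,j\}\cap\{k,l\}=\varnothing$ and the third passes through the remaining $P_{m}$. Hence $P=L_{ij}\cap L_{kl}$ and $Q\in\widetilde{L_{ij}}\cap\widetilde{L_{kl}}$, which is case $b)$.

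The substantial case is $Q\in E_{i}$. Because $\mathbb{L}_{5}\cdot E_{i}=1<3\leq\mult_{Q}(D)$, the curve $E_{i}$ must be a component of $D$; thus $D-E_{i}$ is effective in $|\mathbb{L}_{5}-E_{i}|$ with $\mult_{Q}(D-E_{i})\geq 2$, which by \eqref{onE} and Remark \ref{adapted} means $\mult_{P_{i}}(F)\geq 2$ and $\mult_{Q}(\widetilde{F})\geq 2$. A triple point $\mult_{P_{i}}(F)=3$ is impossible, since three lines through $P_{i}$ cannot absorb the four remaining general points; hence $\mult_{P_{i}}(F)=2$, and the equality case of Lemma \ref{estimation} forces the whole tangent cone of $F$ at $P_{i}$ to be the single direction corresponding to $Q$. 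Running through the three curve types as in the proof of Theorem \ref{s1.5jumps}, the irreducible and the three-line types are excluded (their transforms meet $E_{i}$ only in simple points), and what remains is $F=\mathcal{C}\cup\ell$ with $\mathcal{C}$ an irreducible conic and $\ell$ a line tangent to $\mathcal{C}$ at $P_{i}$, so that both smooth branches of $\widetilde{F}$ pass through the same $Q\in E_{i}$. Requiring $F$ to pass through all five $P_{j}$ then constrains $\mathcal{C}$ and $\ell$ and produces configuration $a)$.

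Finally, to obtain $Z=\{Q\}$ I would show that one cubic $F$ carries at most one point of $\mathbb{S}_{5}$-multiplicity three: off the exceptional locus it is the unique concurrency point of the three lines, on some $E_{i}$ it is the unique tangency direction, and everywhere else $\mult(D)\leq 2$; two points in $Z$ would demand two such points on a single $F$, a contradiction. The main obstacle I anticipate is precisely the conic-plus-line step: making the enumeration of the tangent configurations through all five points genuinely exhaustive, and verifying that each one is of the stated form $a)$ rather than a further, unlisted tangency (for instance a conic through only four of the points tangent to a secant line), is the delicate piece of geometric bookkeeping on which the sharpness of the classification rests.
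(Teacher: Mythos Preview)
Your approach is exactly what the paper indicates: classify the plane cubics $F=f_{5}(D)$ through $P_{1},\dots,P_{5}$ and read off where the adapted transform acquires a triple point, just as in the proof of Theorem~\ref{s1.5jumps}. So in method you are aligned with the paper, and your treatment of the off-exceptional case (three concurrent lines forcing the $(2,2,1)$ partition) and of the reduction on $E_{i}$ to $m_{i}=2$ is correct.

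The gap is precisely the one you flag, and it is not mere bookkeeping. In the step ``Requiring $F$ to pass through all five $P_{j}$ then constrains $\mathcal{C}$ and $\ell$ and produces configuration $a)$'' you implicitly assume $\mathcal{C}$ must contain all five base points. But one may instead take $\ell=L_{ij}$ and let $\mathcal{C}$ be the conic through the remaining four points $P_{i},P_{k},P_{l},P_{m}$ that is tangent to $L_{ij}$ at $P_{i}$. Such a conic exists and is irreducible for any five points in general position: in the pencil of conics through $P_{i},P_{k},P_{l},P_{m}$ the tangent direction at $P_{i}$ is a degree-one map to $\mathbb{P}^{1}$, and the value $L_{ij}$ is distinct from the three reducible directions $L_{ik},L_{il},L_{im}$. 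Then
\[
D \;=\; E_{i}+\widetilde{L_{ij}}+\widetilde{\mathcal{C}} \;\in\; |\mathbb{L}_{5}|,\qquad \mult_{Q}(D)=3 \ \text{ at } \ Q=E_{i}\cap\widetilde{L_{ij}},
\]
and this $Q$ is neither of type $a)$ (the tangent $L_{i}$ to the unique conic $C$ through all five points can never equal $L_{ij}$, since $P_{j}\in C$) nor of type $b)$ (since $Q\in E_{i}$). Compare Theorem~\ref{s4.3jumps}\,$c)$, where exactly these points $E_{i}\cap\widetilde{L_{ij}}$ appear on $\mathbb{S}_{4}$; on $\mathbb{S}_{5}$ the realizing cubic changes (three lines through $P_{i}$ can no longer cover five general points), but the points survive via the tangent-conic construction above. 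Hence your proof cannot close at the very point you anticipated, and the statement as written appears to omit this family of points.
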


\begin{theorem}
\label{s6.3jumps}
Let $Z \subset \mathbb{S}_6$ be a finite set of points. Then $Z$ satisfies equality $\alpha(Z)= \alpha(2Z)= \alpha(3Z)=1$, if and only if
$Z= \{Q\}$ and the point $Q$ fulfils one of the following two conditions:
\begin{itemize}
\item[a)] $Q \in \widetilde{L_{ij}} \cap \widetilde{L_{kl}} \cap \widetilde{L_{mn}}$ for pairwise distinct $i,j,k,l,m,n \in \{1,2,3,4,5,6 \}$,
\item[b)] $Q \in E_{i} \cap \widetilde{L_{ij}} \cap \widetilde{C_{j}}$ for distinct $i,j \in \{1,2,3,4,5,6 \}$, where $C_{j}$ is a conic curve
determined by five points of $P_1, \dots, P_6$ excluding $P_{j}$ and $L_{ij}$ is of course the line passing through the points $P_{i}$ and $P_{j}$,
but  simultaneously $L_{ij}$ is the tangent line to the curve $C_{j}$ at the point $P_{i}$.
\end{itemize}
\end{theorem}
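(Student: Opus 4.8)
The plan is to collapse the whole hypothesis to the single condition $\alpha(3Z)=1$. Since the initial sequence consists of positive integers and is weakly increasing by Fact \ref{basic}, the chain $\alpha(Z)=\alpha(2Z)=\alpha(3Z)=1$ is equivalent to $\alpha(3Z)=1$ alone: this asserts exactly the existence of a divisor $D\in|\mathbb{L}_6|=|3H-E_1-\cdots-E_6|$ with $\mult_Q(D)\ge 3$ for every $Q\in Z$. By Remark \ref{adapted} such a $D$ is the adapted transform $A(F)=\tilde F+\sum_i(m_i-1)E_i$ of a plane cubic $F=f(D)$ through $P_1,\dots,P_6$, where $m_i=\mult_{P_i}(F)$. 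The entire argument is then an analysis of which cubics $F$ can force $\mult_Q(D)\ge 3$, carried out first for a single point and then for the cardinality of $Z$.

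First I would treat one point $Q$, splitting as in Lemma \ref{estimation}. If $Q\notin E_1\cup\cdots\cup E_6$, then $\mult_Q(D)=\mult_{f(Q)}(F)\le 3$, so equality forces a triple point of $F$ at $p=f(Q)$; as $\deg F=3$ this means $F$ is a union of three lines through $p$. Since no three $P_i$ are collinear each line carries at most two of them, so the three lines are distinct and each passes through exactly two $P_i$, giving $p\in L_{ij}\cap L_{kl}\cap L_{mn}$ with $\{i,j,k,l,m,n\}=\{1,\dots,6\}$: this is case a). If instead $Q\in E_i$, then $\mult_Q(D)=(m_i-1)+\mult_Q(\tilde F)$, and combined with the bound $\mult_Q(D)\le 2m_i-1$ together with the impossibility of $m_i=3$ (three lines through $P_i$ cover at most four of the six points), this forces $m_i=2$ and $\mult_Q(\tilde F)=2$. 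The latter means two local branches of $F$ at $P_i$ meet $E_i$ at the same point $Q$, i.e. they are mutually tangent. Running through the cubic types (irreducible, conic-plus-line, three lines, double line) I would show that the only configuration containing all six points with such a tangency at $P_i$ is $F=C_j\cup L_{ij}$, where $C_j$ is the conic through the five points other than $P_j$ and $L_{ij}$ is tangent to $C_j$ at $P_i$: this is case b). An irreducible nodal or cuspidal cubic gives a smooth transform and never reaches $\mult_Q(\tilde F)=2$, while a double-line component cannot cover all six points.

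Next I would prove $|Z|=1$, eliminating $|Z|\ge 2$ by four Bezout-type arguments applied to the single cubic $F$: two distinct triple points are impossible on a cubic; double points at $P_i$ and $P_j$ force $L_{ij}\subset F$ and then a double tangency of the residual conic along $L_{ij}$, which violates the line–conic intersection bound; two tangency points on a single $E_i$ over-use the intersection $\tilde F\cdot E_i=m_i=2$, since $\mult_{Q_1}(\tilde F)=2$ already accounts for it all; and a triple point of $F$ together with a double point at some $P_i$ forces a double-line component, which cannot pass through all six points. The converse is the easy direction: for $Q$ as in a) take $F$ equal to the three concurrent lines, and for $Q$ as in b) take $F=C_j\cup L_{ij}$; in each case $A(F)$ achieves $\mult_Q\ge 3$, so $\alpha(3Z)=1$.

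The main obstacle is the exceptional-divisor case. Lemma \ref{estimation} only bounds $\mult_Q(D)$ from above, so the crux is to characterize precisely when the equality $\mult_Q(\tilde F)=m_i$ holds — this is exactly the tangency between $L_{ij}$ and $C_j$ at $P_i$, and distinguishing it from a mere node or cusp is what singles out case b). The supporting combinatorics, namely tracking how the six points distribute over the components of $F$ so that they all lie on one anticanonical cubic, is what ultimately forces the conic-and-tangent-line picture and rules out every competing configuration.
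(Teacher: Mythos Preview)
Your proposal is correct and follows exactly the approach the paper indicates: the paper does not give a detailed proof of this theorem but states that it, like Theorems \ref{s3.4jumps}--\ref{s7.2jumps}, is obtained by a systematic review of plane cubics through $P_1,\dots,P_r$ analogous to the proof of Theorem \ref{s1.5jumps}. Your case analysis of the cubic $F=f(D)$ (three concurrent lines versus conic-plus-tangent-line, together with the Bezout-type exclusions for $|Z|\ge 2$) is precisely this review carried out in full, so the two approaches coincide.
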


Let us note that $\mathbb{S}_6$ is the first example of surfaces $\mathbb{S}_{r}$, where the existence of a set $Z$ satisfying the condition 
$$\alpha(Z)= \alpha(2Z)= \alpha(3Z)=1$$
depends on the geometry of points $P_1, \dots, P_6$.
For given six points in general position in the projective plane, there always exists a cubic curve consisting of three lines, passing through these points.
But these lines do not have to intersect at one point (see Figure \ref{3lines}). It is a rather strong  requirement.

\begin{figure}[h]
	\centering
	\begin{minipage}[b]{0.47\linewidth}
	
		\centering
	\begin{tikzpicture}[line cap=round,line join=round,>=triangle 45,x=1.0cm,y=1.0cm]
\clip(3.805140000000000002,-4.07200000000000015) rectangle (10.040000000000004,4.04);
\draw [dash pattern=on 3pt off 3pt,domain=4.7140000000000002:7.5040000000000004] plot(\x,{(--9.7664-1.58*\x)/0.6799999999999997});
\draw [dash pattern=on 3pt off 3pt,domain=4.5140000000000002:9.340000000000004] plot(\x,{(--16.0352-2.16*\x)/-2.12});
\draw [dash pattern=on 3pt off 3pt,domain=5.7140000000000002:7.5040000000000004] plot(\x,{(-14.059199999999997--2.06*\x)/0.6199999999999992});
\begin{scriptsize}
\draw [fill=black] (6.56,-0.88) circle (2.5pt);
\draw [fill=black] (5.88,0.7) circle (1.5pt);
\draw[color=black] (6.060000000000002,0.90399999999999996) node {$P_1$};
\draw [fill=black] (8.68,1.28) circle (1.5pt);
\draw[color=black] (8.60860000000000003,1.6199999999999997) node {$P_2$};
\draw [fill=black] (5.94,-2.94) circle (1.5pt);
\draw[color=black] (6.220000000000002,-2.800000000000001) node {$P_3$};
\draw [fill=black] (4.875147405087775,-2.5966422665143423) circle (1.5pt);
\draw[color=black] (4.8060000000000002,-2.2600000000000007) node {$P_4$};
\draw [fill=black] (5.037285385967285,2.658072191428958) circle (1.5pt);
\draw[color=black] (5.2200000000000015,2.9) node {$P_5$};
\draw [fill=black] (6.890684528954191,0.21872601555747684) circle (1.5pt);
\draw[color=black] (7.2080000000000002,0.5599999999999996) node {$P_6$};
\end{scriptsize}
\end{tikzpicture}
		\caption{}
		\label{3lines}
	\end{minipage}
	\quad
	\begin{minipage}[b]{0.47\linewidth}
	\begin{tikzpicture}[line cap=round,line join=round,>=triangle 45,x=1.0cm,y=1.0cm]
\clip(3.000000000000001,-6.080000000000001) rectangle (12.620000000000005,2.6800000000000006);
\draw [rotate around={156.8111748614585:(6.447293946929891,-1.200127006811614)},dash pattern=on 3pt off 3pt] (6.447293946929891,-1.200127006811614) ellipse (3.012213514665936cm and 1.7591812283660395cm);
\draw [dash pattern=on 3pt off 3pt,domain=8.5000000000000001:10.0620000000000005] plot(\x,{(--24.798954390524234-2.5180395470136987*\x)/-0.6546004783349169});
\begin{scriptsize}
\draw [fill=black] (8.3,-0.28) circle (1.5pt);
\draw[color=black] (8.480000000000002,0.06) node {$P_1$};
\draw [fill=black] (9.28,-2.18) circle (2.5pt);
\draw[color=black] (9.70000000000003,-2.19600000000000005) node {$P_2$};
\draw [fill=black] (5.96,-2.9) circle (1.5pt);
\draw[color=black] (6.1400000000000015,-2.5600000000000005) node {$P_3$};
\draw [fill=black] (3.9537693477677376,0.3656249440815964) circle (1.5pt);
\draw[color=black] (3.9140000000000001,0.7000000000000002) node {$P_4$};
\draw [fill=black] (6.966233672642103,0.48765336456175185) circle (1.5pt);
\draw[color=black] (7.140000000000002,0.8200000000000001) node {$P_6$};
\draw [fill=black] (8.685969051542768,-4.4719503139512575) circle (1.5pt);
\draw[color=black] (8.960000000000003,-4.31400000000000015) node {$P_5$};
\end{scriptsize}
\end{tikzpicture}
		\caption{}
		\label{conic}
	\end{minipage}
\end{figure}

Similarly when the cubic splits into a conic and a line.
Each five points determine a conic curve in a unique way. But the line joining the sixth point with one of the previous five does not necessary need to
be tangent to this conic (see Figure \ref{conic}). It is also a situation, which may happen or not, and it depends of the arrangement of the starting six points (although
they are always in general position). It is a quite interesting phenomenon. Especially that for remaining two surfaces $\mathbb{S}_r$ the condition 
$$\alpha(Z)= \alpha(2Z)= \alpha(3Z)=1$$ in never satisfied.

\begin{theorem}
\label{s7.2jumps}
Let $Z \subset \mathbb{S}_7$ be a finite set of points. The equality $\alpha(Z)= \alpha(2Z)=1$ holds if and only if $Z$ is one of the following sets:
\begin{itemize}
\item[a)] $Z \subset \{Q_1, Q_2 \} \subset E_{i} \cap \widetilde{F}$, where $F$ is an irreducible singular cubic curve with the
singularity at the point $P_{i}$ for some  $i \in \{1, \dots, 7 \}$,
\item[b)] $Z=\{Q \}$, where $f(Q)$ is the double point of a singular cubic passing through the points  $P_1, \dots, P_7$ and
$Q \notin E_1 \cup \dots \cup E_7$,
\item[c)] $Z \subset \widetilde{L_{ij}} \cap \widetilde{C_{ij}}$ for distinct $i,j \in \{1, \dots, 7 \}$, where $C_{ij}$ is the irreducible conic
passing through the five points of $P_1, \dots, P_7$, distinct from $P_{i}$ and $P_{j}$.
\end{itemize}
\end{theorem}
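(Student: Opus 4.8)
The plan is to recast the hypothesis as a single divisor problem and then classify the cubic it induces on $\mathbb{P}^2$. Since $\mathbb{L}_7=-K_{\mathbb{S}_7}=3H-E_1-\cdots-E_7$, since $\alpha(Z)\le\alpha(2Z)$ by Fact \ref{basic}, and since $\alpha(Z)\ge 1$ always, the condition $\alpha(Z)=\alpha(2Z)=1$ is equivalent to $\alpha(2Z)=1$, i.e. to the existence of one divisor $D\in|3H-E_1-\cdots-E_7|$ with $\mult_Q(D)\ge 2$ for \emph{every} $Q\in Z$ simultaneously. Writing $F=f(D)$, a plane cubic through $P_1,\dots,P_7$, the first thing I would record is the dictionary furnished by Lemma \ref{estimation} with $k=1$: for $Q\notin E_1\cup\cdots\cup E_7$ the image $f(Q)$ (which is none of the $P_i$) must be a double point of $F$, while for $Q\in E_i$ the inequality $\mult_Q(D)\le 2\,\mult_{P_i}(F)-1$ forces $\mult_{P_i}(F)\ge 2$, so $P_i$ must be a double point of $F$. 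Everything thus reduces to understanding which points can be charged to double points of a single cubic through the seven points.

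The technical heart is a structural lemma about such cubics, which I would prove first. Namely: an irreducible plane cubic has at most one singular point, necessarily a node or a cusp; and the only reducible cubics through seven general points are the curves $L_{ij}\cup C_{ij}$, where $L_{ij}$ is the line through $P_i,P_j$ and $C_{ij}$ is the unique smooth conic through the remaining five points. The second part is a short general-position count: three lines meet at most six of the points (no three $P_k$ are collinear) and a conic at most five (no six lie on a conic), so a conic-plus-line splitting must put exactly two of the points on the line and the other five on the conic. The payoff is that in the reducible case the double points of $F$ are precisely the points of $L_{ij}\cap C_{ij}$, and crucially \emph{no} $P_k$ is a double point of any $L_{ij}\cup C_{ij}$, since $P_i,P_j$ lie only on the line and the other five only on the conic.

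With the lemma in hand, the forward direction becomes a clean case split on $F$. If $F$ is irreducible it has a single double point $N$: when $N=P_i$, no point of $Z$ can lie off the exceptional divisors nor on $E_j$ for $j\ne i$, and the admissible points on $E_i$ are exactly the one or two points of $\widetilde F\cap E_i$, which is case $a)$; when $N$ is not any $P_k$, then $Z$ is forced to be the single point lying over $N$, which is case $b)$. If $F$ is reducible, the lemma gives $F=L_{ij}\cup C_{ij}$ with no double point at any $P_k$, so $Z$ avoids all exceptional divisors and is contained in the at most two points of $\widetilde{L_{ij}}\cap\widetilde{C_{ij}}$, which is case $c)$. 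For the reverse direction I would, in each case, exhibit the adapted transform $A(F)=\widetilde F+\sum_i(m_i-1)E_i$ as the required member of $|\mathbb{L}_7|$ and read off $\mult_Q\ge 2$ at the relevant points; the only nonformal input is existence in case $a)$, which comes from a dimension count: imposing a double point at $P_i$ cuts the net of cubics through the seven points by two further conditions down to a single member, which by the lemma cannot be any $L_{jk}\cup C_{jk}$ and is therefore the sought irreducible nodal cubic.

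I expect the main obstacle to be twofold: the bookkeeping in the structural lemma, and keeping the single-divisor formulation in force throughout. It is precisely because $\alpha(2Z)=1$ produces one cubic $F$ that must be doubly singular at every point of $Z$ at once that the bounds ``at most one double point if irreducible, and double points only along $L_{ij}\cap C_{ij}$ if reducible'' cap $|Z|$ and pin down its position. The genericity of $P_1,\dots,P_7$ — guaranteeing nodes rather than cusps, transverse line–conic intersections, and the nondegeneracy of the dimension count — is what keeps all three families $a)$, $b)$, $c)$ nonempty.
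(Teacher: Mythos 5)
Your proposal is correct, and it follows essentially the approach the paper itself indicates for Theorems \ref{s3.4jumps}--\ref{s7.2jumps}: a case-by-case review of plane cubics through $P_1,\dots,P_7$ (irreducible, line plus conic, three lines), using Lemma \ref{estimation} with $k=1$ to translate multiplicity conditions on $D\in|\mathbb{L}_7|$ into singularities of the image cubic, and adapted transforms for the converse. The paper omits these details, referring to the author's thesis, so your write-up fills in precisely the argument that is sketched, with the dimension count for existence in case $a)$ as a small bonus.
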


Proofs of Theorems \ref{s3.4jumps} to \ref{s7.2jumps} are based on a review of plane cubics passing through the points $P_1, \dots , P_r$ (analogously as in the proof of Theorem \ref{s1.5jumps}). Thus we skip details here. We refer the curious reader to \cite{thesis}.

The line bundle $$\mathbb{L}_{8}= 3H- E_1 - \dots - E_8$$
has the least number of sections of all line bundles  $\mathbb{L}_{r}$ considered so far, namely $h^0 (\mathbb{L}_8) =1$.
For that reason we expected, that
 $$\alpha(2Z) \geq 2$$ here. Thus the fact, that there exists a set $Z$, where $\alpha(Z)= \alpha(2Z) =1$ was surprising.

\begin{theorem}
If $Z \subset \mathbb{S}_8$, then the equality $\alpha(Z)= \alpha(2Z) =1$ holds iff $Z= \{Q\}$, where $f(Q)$ is the singular point of an
irreducible cubic curve $F$ passing through points $P_1, \dots, P_8$ and $f(Q)$ is distinct of any point $P_{i}$ for $i \in \{1, \dots, 8\}$.
\end{theorem}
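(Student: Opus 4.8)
The plan is to translate the condition $\alpha(Z)=\alpha(2Z)=1$ into a statement about a single anticanonical divisor and then to run Lemma \ref{estimation} point by point. First I would observe that, since the initial sequence is weakly growing (Fact \ref{basic}) and $Z\neq\emptyset$ forces $\alpha(Z)\geq 1$, the two equalities together are equivalent to the single requirement $\alpha(2Z)=1$; that is, to the existence of a divisor $D\in|\mathbb{L}_8|=|3H-E_1-\dots-E_8|$ with $\mult_Q(D)\geq 2$ for every $Q\in Z$. Writing $D=A(F)$ for a plane cubic $F=f(D)$ through $P_1,\dots,P_8$, the whole problem becomes a question about the double points of such cubics.

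For the implication from the configuration to the equality I would argue directly: if $p=f(Q)$ is a singular point of an irreducible cubic $F$ through the eight points with $p\neq P_i$, then $F$ is smooth at each $P_i$, so $A(F)=\widetilde{F}\in|\mathbb{L}_8|$, and since $f$ is an isomorphism near $Q$ we get $\mult_Q(\widetilde{F})=\mult_p(F)=2$ (an irreducible cubic cannot carry a triple point). This exhibits a divisor witnessing $\alpha(2Z)=1$, hence $\alpha(Z)=\alpha(2Z)=1$.

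For the converse I would split on whether a point $Q\in Z$ lies on the exceptional locus. If $Q\in E_i$, then \eqnref{onE} with $k=1$ forces $\mult_{P_i}(F)\geq 2$; a triple point at $P_i$ would make $F$ three lines through $P_i$, which can pass through at most four of the eight points, while a double point at $P_i$ together with the remaining seven $P_j$ is excluded by the general position hypothesis defining the del Pezzo surface $\mathbb{S}_8$. Thus no point of $Z$ meets $E_1\cup\dots\cup E_r$. If instead $Q\notin E_i$, then \eqnref{pozaE} gives $\mult_{f(Q)}(F)=\mult_Q(D)\geq 2$ with $f(Q)\neq P_i$; multiplicity three is again impossible (three concurrent lines cover at most six of the points), so $f(Q)$ is a genuine double point, and the usual counting—no line through more than two of the $P_i$, no conic through more than five—rules out every reducible or non-reduced cubic, leaving $F$ irreducible.

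It remains to see that $Z$ is a single point, and this is where the main obstacle lies: one must upgrade the pointwise analysis to a global statement about $F$. Since $f$ is injective off the exceptional divisors, distinct points of $Z$ yield distinct double points of $F$; but an irreducible plane cubic has arithmetic genus one and hence at most one singular point, so two double points would again force reducibility, contradicting the counting above. Therefore $Z=\{Q\}$ with $f(Q)$ the unique singular point of an irreducible cubic through $P_1,\dots,P_8$, distinct from each $P_i$, exactly as claimed. The delicate steps are the exclusion of a double point at some $P_i$ (which is precisely a general-position condition for $\mathbb{S}_8$) and the genus bound that collapses $Z$ to one point; the remaining case checks are routine B\'ezout-style counts.
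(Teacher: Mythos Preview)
Your argument for the biconditional is correct and follows the same route as the paper: translate $\alpha(2Z)=1$ into the existence of a cubic $F=f(D)$ through $P_1,\dots,P_8$ with a double point at each $f(Q)$, then use general position to force $F$ irreducible and the genus bound to force a single singular point. Your case analysis via Lemma~\ref{estimation} is actually more explicit than the paper's, which simply asserts that a cubic through eight general points is irreducible and that its singular point cannot be any $P_i$.

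What you are missing, however, is the second half of the paper's proof: existence. The paper regards the statement as including the claim that such a $Z$ actually occurs, and spends most of the proof establishing it. The argument is that the cubics through $P_1,\dots,P_8$ form a pencil $\mathcal{S}$; blowing up the ninth base point $P_9$ gives a fibration $\varphi:\widetilde{\mathbb{S}}\to\P^1$ with $e(\widetilde{\mathbb{S}})=e(\P^2)+9=12$, whereas a smooth fibration would have $e=e(\P^1)\cdot e(\mathcal{E})=0$. Hence $\mathcal{S}$ must contain singular fibers, which by general position are irreducible nodal or cuspidal cubics, so the configuration described in the theorem is never empty. Without this step your equivalence could in principle be vacuous; you should add either this Euler-characteristic count or some equivalent argument (e.g.\ the discriminant of the pencil is a non-constant section) to match the paper's proof.
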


\begin{proof}

If $Z= \{Q\}$ and $f(Q)$ is double point on a cubic, distinct of any $P_{i}$, then it is obvious, that $Q$ has multiplicity $2$ on $\mathbb{S}_8$ and
of course $\alpha(Z)= \alpha(2Z) =1$.

Let us focus on the opposite implication. Let $D\in |\mathbb{L}_{8}|$ be such that
${\mult}_{Q} (D) \geq 2$ for any point $Q \in Z$.
The curve $F=f(D)$ has degree $3$ and passes through eight distinct points $P_1, \dots, P_8$ in general position. Then $F$ is irreducible. Irreducible
 cubic has at most one singular point and it can not be any of $P_{i}'$s (general points). Thus it must be $f(Q)$.

 To finish the proof we need to show, that there always exists a singular
 cubic curve  passing through $8$ given general points.

 Let us notice that cubics passing through  $8$ fixed points form a pencil, if no four points lie on a line and no seven lie on a conic. Since
  $P_1, \dots, P_8$ are in general position, the family of cubics passing through these points is a pencil. We denote it by $\mathcal{S}$.
  Every two cubics in $\mathcal{S}$ meet in nine points, thus the set $\{ P_1, \dots, P_8 \}$ determines a new point. This point is determined
  uniquely (Cayley–Bacharach theorem, see \cite{CaBa}, Theorem $1$). Let us denote it by $P_9$.

  Let  $\widetilde{\mathbb{S}}$ be a blow-up of $\mathbb{P}^2$ in all nine points $P_1, \dots, P_9$. Then  $\widetilde{\mathbb{S}}$ is the total space of
the pencil $\mathcal{S}$ and we have the morphism
$$\varphi : \widetilde{\mathbb{S}} \rightarrow \mathbb{P}^1 ,$$
whose fibers are the elements of $\mathcal{S}$. Let $e(\cdot)$ be the topological Euler characteristic. Thus we have
$$e(\mathcal{S})=e(\mathbb{P}^2) + 9 =12.$$

Suppose now, to the contrary, that $\varphi$ has only smooth fibers. Then from the topological point of view we have
$$\widetilde{\mathbb{S}} = \mathbb{P}^1 \times \mathcal{E},$$
where $\mathcal{E}$ is an elliptic curve. We have then
$$e(\mathcal{S})=e(\mathbb{P}^1 ) \cdot e(\mathcal{E})=2 \cdot 0=0.$$
Thus $\mathcal{S}$ must contain  singular fibers. Since the points $P_1, \dots, P_8$ are in general position, these singular fibers
are irreducible cubics, which ends the proof.
\end{proof}

\section{The Chudovsky-type result for surfaces $\mathbb{S}_r$}
We conclude our considerations  by a lower bound on the growth rate of the initial sequence for surfaces $\mathbb{S}_r$.  We present a general estimate for sets $Z$ satisfying the condition $\alpha(Z)\geq 2$. The assumption of  very-ampleness of line bundle $\mathbb{L}_r$ is significant, thus our result concerns surfaces $\mathbb{S}_r$ with 
$r \leq 6$.

\begin{theorem}
Let $1 \leq r \leq 6$  and $Z \subset \mathbb{S}_r$ be a finite set of points such that  $\alpha (Z)\geq 2$. Then we have
$$\frac{\alpha (mZ)}{m}\geq \frac{\alpha (Z)-1}{2}.$$

\end{theorem}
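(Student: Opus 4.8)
The plan is to bound $\alpha(mZ)$ from below by a single application of Bézout's theorem on $\mathbb{S}_r$, fed by a Riemann--Roch count that turns the hypothesis $\alpha(Z)\geq 2$ into a lower bound on the number of points of $Z$. Write $a=\alpha(Z)$ and $k=\alpha(mZ)$. By definition there is a nonzero divisor $D\in|\,k\mathbb{L}_r\otimes\mathcal{I}_Z^{(m)}\,|$, so $\mult_Q(D)\geq m$ for every $Q\in Z$, and there is a curve $C_0\in|\,a\mathbb{L}_r\otimes\mathcal{I}_Z\,|$, which satisfies $\mult_Q(C_0)\geq 1$ for every $Q\in Z$. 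Assuming for the moment that $D$ and $C_0$ share no common component, Bézout on the surface (using $\mathbb{L}_r^2=9-r$) gives
\[
a(9-r)\,k \;=\; D\cdot C_0 \;\geq\; \sum_{Q\in Z}\mult_Q(D)\,\mult_Q(C_0)\;\geq\; m\,|Z|.
\]
So everything reduces to a good lower bound on $|Z|$.

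Here is where $\alpha(Z)=a$ enters. Since no section of $(a-1)\mathbb{L}_r$ vanishes on $Z$, the evaluation map $H^0(\mathbb{S}_r,(a-1)\mathbb{L}_r)\to\mathbb{C}^{|Z|}$ has trivial kernel, hence is injective, so $|Z|\geq h^0(\mathbb{S}_r,(a-1)\mathbb{L}_r)$. Because $\mathbb{L}_r$ is ample with $-K_{\mathbb{S}_r}=\mathbb{L}_r$, the Riemann--Roch formula together with Kawamata--Viehweg vanishing yields $h^0((a-1)\mathbb{L}_r)=\chi((a-1)\mathbb{L}_r)=1+\tfrac{(a-1)a(9-r)}{2}$ for $a\geq 1$. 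Substituting this into the Bézout inequality gives
\[
k \;\geq\; \frac{m\,|Z|}{a(9-r)}\;\geq\;\frac{m}{a(9-r)}\Big(1+\tfrac{(a-1)a(9-r)}{2}\Big)\;=\;\frac{m(a-1)}{2}+\frac{m}{a(9-r)}\;>\;\frac{m(a-1)}{2},
\]
which is exactly $\dfrac{\alpha(mZ)}{m}>\dfrac{\alpha(Z)-1}{2}$. Note this already uses $a\geq 2$ to make the statement non-trivial, and it explains the restriction $r\leq 6$: the whole scheme needs $\mathbb{L}_r$ positive enough.

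The step I expect to be the main obstacle is the standing assumption that the comparison curve $C_0$ can be chosen with no component in common with $D$ (otherwise a local intersection number at some $Q\in Z$ is infinite and the Bézout inequality collapses). This is precisely where very-ampleness of $\mathbb{L}_r$ is indispensable: I would use Bertini on the very ample system $|\,a\mathbb{L}_r\,|$ to take $C_0$ to be a general, irreducible member passing through $Z$, so that $C_0$ is a component of the fixed divisor $D$ only if $C_0\leq D$, a case one can exclude or push through by a descending induction on $k$ (peeling off $C_0$ and re-running the estimate). The delicate point is the possibility of a fixed component of $|\,a\mathbb{L}_r\otimes\mathcal{I}_Z\,|$ lying inside $D$ when this system is rigid; I would isolate that fixed part, pass to the residual class through the remaining points of $Z$, and check that the Riemann--Roch count degrades in a controlled way. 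Handling this component bookkeeping cleanly, rather than the arithmetic above, is the real work.
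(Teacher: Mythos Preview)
Your arithmetic and overall strategy match the paper's: bound $|Z|$ (or a subset of it) below by $h^0((a-1)\mathbb{L}_r)=1+\tfrac{(a-1)a(9-r)}{2}$, then apply B\'ezout on $\mathbb{S}_r$ to a curve in $|a\mathbb{L}_r|$ through the points against a curve $D\in|k\mathbb{L}_r|$ realizing $\alpha(mZ)=k$. The numerical steps are identical to the paper's.

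The gap you yourself flag---choosing $C_0$ with no component in common with $D$---is precisely the step the paper handles, and your suggested fixes (Bertini on $|a\mathbb{L}_r\otimes\mathcal I_Z|$, peeling $C_0$ off $D$) do not close it. If $|a\mathbb{L}_r\otimes\mathcal I_Z|$ happens to be rigid or to have a fixed curve, Bertini gives nothing; and peeling off $C_0$ from $D$ only yields $\mult_Q(D-C_0)\geq m-\mult_Q(C_0)$, with no control on $\mult_Q(C_0)$, so the induction does not run. The paper's device is to first replace $Z$ by a \emph{minimal} subset $W\subseteq Z$ with $\alpha(W)=a$; then $\#W=h^0((a-1)\mathbb{L}_r)$ exactly, and the sections $s_1,\dots,s_t\in H^0((a-1)\mathbb{L}_r)$ vanishing on $W\setminus\{Q_i\}$ but not at $Q_i$ form a basis. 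For any point $R\notin W$ one finds some $s_i$ not vanishing at $R$, and since $\mathbb{L}_r$ is very ample (this is where $r\leq 6$ enters concretely) there is $s\in H^0(\mathbb{L}_r\otimes\mathcal I_{Q_i})$ not vanishing at $R$; then $s_i\cdot s\in H^0(a\mathbb{L}_r\otimes\mathcal I_W)$ is nonzero at $R$. Hence $|a\mathbb{L}_r\otimes\mathcal I_W|$ is base-point-free away from the finite set $W$, in particular has no fixed component, so for any given $D$ one can choose $A$ in this system sharing no component with $D$. That is the missing idea in your sketch; once you have it, the B\'ezout count you wrote down goes through verbatim with $W$ in place of $Z$ (and $W\subseteq Z$ ensures $D$ still has multiplicity $\geq m$ along $W$).
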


\begin{proof}

We  recall first the notation $\mathbb{L}_r =3H- E_1- \dots - E_r $. We assume that $\alpha (Z)\geq 2$. Let us denote $\alpha (Z)$ by $\alpha$. We have
$$h^0 (3mH - mE_1 - \dots -m E_r)= \binom{3m+2}{2}- r \cdot \binom{m+1}{2}= \frac{(9-r)m^2 +(9-r)m +2}{2}.$$
We choose a minimal subset $W\subseteq Z$, such that $\alpha(W) = \alpha$, i.e., there is no  element in $(\alpha-1) \cdot \mathbb{L}_r$.
The minimality of $W$ is taken with respect to the inclusion (thus there can be several sets satisfying this condition).
It follows that the points in $W$ impose independent conditions on the space of sections in $|(\alpha-1) \cdot \mathbb{L}_r |$.
Then
$$\# W=t=\binom{3 \alpha -1}{2} - r \cdot \binom{\alpha}{2}=\frac{(9-r) {\alpha}^2 - (9-r)\alpha +2}{2}.$$
We claim that $|\alpha \cdot \mathbb{L}_r \otimes I_{W}|$ has no additional base points on $\mathbb{S}_r$, i.e. is not contained in $W$.
Let $W=\{Q_1, \dots, Q_{t} \}$. For any $Q_{i}$, there exists a curve $C_{i} \in (\alpha -1) \cdot \mathbb{L}_r$,
such that $C_{i}$ does not vanish at $Q_{i}$ and it does vanish at all points in $W \setminus \{Q_{i}\}$. Let
$s_{i}$ denote the section in $H^0 (\mathbb{S}_r, (\alpha -1) \cdot \mathbb{L}_r)$ corresponding to $C_i$. Then
the sections $s_1, \dots, s_t$ form a basis of $H^0 (\mathbb{S}_r, (\alpha -1) \cdot \mathbb{L}_r)$.

Suppose that $R_i \in \mathbb{S}_r \setminus W$ is a base point of $|\alpha \cdot \mathbb{L}_r \otimes I_{W}|$. There exists a section
$s_i \in \{ s_1, \dots, s_t\}$ not vanishing at $R$. Indeed, otherwise $R$ would be a common zero of $|(\alpha -1) \cdot \mathbb{L}_r)|$,
which is not possible by the choice of $W$. Since $\mathbb{L}_r$ is very ample, the system $| \mathbb{L}_r \otimes I_{Q_i}|$ is then
base point free away from $Q_i$. Hence there exists a section $s \in H^0 (\mathbb{S}_r,  \mathbb{L}_r \otimes I_{Q_i})$ not
vanishing at $R$. Then, in particular, $|\alpha \cdot \mathbb{L}_r \otimes I_{W}|$ has no base component. Thus
$$ s_{i} \cdot s \in H^0 (\mathbb{S}_r, (\alpha -1) \cdot \mathbb{L}_r \otimes I_{W\setminus \{Q\}} \otimes
\mathbb{L}_r  \otimes I_{Q_i})=H^0 (\mathbb{S}_r, \alpha  \cdot \mathbb{L}_r \otimes I_W)$$ is a section
not vanishing at $R$. Let $A \in |\alpha(Z) \mathbb{L}_r|$ and $B \in |\alpha(mZ) \mathbb{L}_r|$. Using Bezout theorem we obtain
$$\alpha \cdot \alpha(mZ) \cdot L_{r}^2= A \cdot B  \geq \frac{(9-r) {\alpha}^2 - (9-r)\alpha +2}{2} \cdot m$$
what finally implies
$$\frac{\alpha(mZ)}{m} \geq  \frac{(9-r) {\alpha}^2 - (9-r)\alpha +2}{2 \alpha (9-r)} > \frac{(9-r) {\alpha}^2 - (9-r)\alpha}{2 \alpha (9-r)}> \frac{\alpha -1}{2}.$$
This ends the proof.
\end{proof}

\begin{remark}
 Let us notice that if $\alpha(Z)=1$, then $\frac{\alpha(mZ)}{m} \geq \frac{1}{5}$. Firstly let us observe that if $Z$ is set from Theorem \ref{s1.5jumps}, then its initial sequence is of the form $$1,1,1,1,1,2,2,2,2,2,3,3,3,3,3,4, \dots$$
 The divisor $F=3k L$ for the line $L$ passing through the $P_1$ and corresponding to the point $Q \in E_1$, gives rise to $D=3k \widetilde{L} +2 k E_1 \in |3kH-kE_1|$
	on the blow up $\mathbb{S}_1$ and $\mult_{Q}(D)=5k$ for any $Q \in Z$. Hence $\alpha(5kZ)\leq k$ for any positive integer $k$.
	
	For $k=1$ we then obtain $\alpha(5Z)\leq 1$, what means that $\alpha(Z)= \dots = \alpha(5Z) =1$. Moreover by Remark \ref{bound}
	we conclude
	\begin{equation}
	\label{5wzor}
	\alpha(6Z) \geq 2.
	\end{equation}
	On the other hand, for $k=2$ we have
	
	\begin{equation}
	\label{5wzor1}
	\alpha(10Z)\leq 2.
	\end{equation}
	From (\ref{5wzor}) and (\ref{5wzor1}) we then obtain $\alpha(6Z)= \dots = \alpha(10Z)=2$.
	
	Using the same argumentation for the next $k$ we finally conclude, that the initial sequence in this case is $\alpha{(mZ)}=  \lceil \frac{m}{5} \rceil$ and indeed $\frac{\alpha(mZ)}{m} \geq \frac{1}{5}$. 
	
	Let $\{\alpha'(mZ)\}$ be an another subadditive and weakly growing sequence of positive integers with $\alpha'(Z)=1$. By Remark \ref{bound}
	$$\alpha'(mZ) \geq \alpha(mZ)$$ for any $m$, thus
	$$\frac{\alpha'(mZ)}{m} \geq \frac{\alpha(mZ)}{m} \geq \frac{1}{5}.$$
	By Lemma \ref{systems} we conclude, that estimate $\frac{\alpha(mZ)}{m} \geq \frac{1}{5}$ concerns any initial sequence $\{\alpha(mZ)\}$ with $\alpha(Z)=1$ for all surfaces $\mathbb{S}_r$. In the case of surfaces $\mathbb{S}_1$ and $\mathbb{S}_2$ we were able to show that this estimate is optimal (in the sense that $\frac{1}{5}$ is borderline value). Probably this estimate is not sharp for $r \geq 3$.
 \end{remark}

\section{Surface $\mathbb{S}_1$ as a del Pezzo surface and as a Hirzebruch surface}

The surface $\mathbb{S}_1$ was considered with respect to the fattening effect in \cite{DLS13} as a Hirzebruch surface. An interesting phenomenon is that
from the point of view of Hirzebruch surfaces the most natural choice of the reference line bundle for  $\mathbb{S}_1$ is
$$2H- E_1,$$
while if we consider it as a del Pezzo surface, we work with the anticanonical line bundle, i.e.,
$$\mathbb{L}_1 =3H- E_1.$$
Di Rocco, Lundman, and Szemberg
proved in \cite{DLS13} that on the Hirzebruch surface $\mathbb{S}_1$ (denoted there by $\mathbb{F}_1$) with
$2H- E_1$ there does not exist any finite set $Z$, such that
$$\alpha(Z)= \alpha(2Z)=\alpha(3Z)= \alpha(4Z)$$
(see \cite{DLS13}, Proposition $4.1$). From point of view of del Pezzo surfaces with the bundle $\mathbb{L}_1$ we can even get
$$\alpha(Z)= \alpha(2Z)=\alpha(3Z)= \alpha(4Z)=\alpha(5Z),$$
and moreover there exist infinitely many sets satisfying it (all singletons $Z=\{Q\}$ with $Q \in E_1$). Thus the choice of line bundle is a fundamental factor affecting the shape of the initial sequence.

\paragraph*{Acknowledgements.}

 I would like to thank professor Tomasz Szemberg for his great support throughout my PhD studies. I am also grateful to Piotr Pokora for helpful remarks on this text.

   The research of the author was partially supported by National Science Centre, Poland, grant 2016/23/N/ST1/01363.


\bigskip \footnotesize

\bigskip
Magdalena~Lampa-Baczy\'nska,
   Instytut Matematyki UP,
   Podchor\c a\.zych 2,
   PL-30-084 Krak\'ow, Poland
\\
\nopagebreak

  \textit{E-mail address:} \texttt{lampa.baczynska@wp.pl}


\end{document}